\theoremstyle{plain}
\newtheorem{theorem}{Theorem}
\newtheorem{lemma}[theorem]{Lemma}
\newtheorem{corollary}[theorem]{Corollary}
\newtheorem{claim}{Claim}
\theoremstyle{definition}
\newtheorem{definition}[theorem]{Definition}
\newtheorem{question}[theorem]{Question}
\numberwithin{theorem}{section}
\title{\bf On partition and almost disjoint properties of combinatorial notions}
\date{}
\author{Teng Zhang
        \footnote{School of Science, Zhejiang University of Science and
Technology, Liuhe Road, Hangzhou, 310023, Zhejiang, China\hfill\break
{\tt zhangteng@zust.edu.cn}\hfill\break
{Keywords: infinite partition, almost disjoint family, the Central Set Theorem, Ramsey Theory \hfill\break
MSC Classification: 05D10, 54D80, 03E05}}
}
\begin{document}
\maketitle

\begin{abstract}
  It is known that there are many notions of largeness in a semigroup that own rich combinatorial properties. In this paper, we focus on partition and almost disjoint properties of these notions. One of the most remarkable results with respect to this topic is that in an infinite very weakly cancellative semigroup of size $\kappa$, every central set can be split into $\kappa$ disjoint central subsets. Moreover, if $\kappa$ contains $\lambda$ almost disjoint subsets, then every central set contains a family of $\lambda$ almost disjoint central subsets. And many other combinatorial notions are found successively to have analogous properties, among these are thick sets, piecewise syndetic sets, $J$-sets and $C$-sets. In this paper, we mainly study four other notions: IP sets, combinatorially rich sets, $C_p$-sets and PP-rich sets. Where the latter two are known in $(\mathbb{N}, +)$, related to the polynomial extension of the central sets theorem. We lift them up to commutative cancellative semigroups and obtain an uncountable version of the polynomial extension of the central sets theorem incidentally. And we finally find that the infinite partition and almost disjoint properties hold for $C_p$-sets in commutative cancellative semigroups and for other three notions in $(\mathbb{N}, +)$.  
\end{abstract}

\section{Introduction}
In Ramsey Theory, there are many notions of largeness in a semigroup that originated in topological dynamics, such as IP sets, central sets, piecewise syndetic sets and so on. They are found to own rich combinatorial properties. Actually many combinatorial or arithmetic results attribute to specific properties of certain notions. And most of them have algebraic descriptions in the Stone–\v{C}ech compactification of semigroups, which have already become a powerful tool to study their combinatorial properties further. In this article, we investigate one of important combinatorial properties - infinite partition and almost disjoint properties. 

With respect to this study, the notion of central sets is a successful case. This notion was introduced by Furstenburg \cite{1981Recurrence} in topological dynamics when he studied finite systems of equations satisfying Rado's columns condition. Furstenburg obtained many properties of central sets, especially the famous central set theorem \cite[Proposition 8.21]{1981Recurrence}, so that this notion gradually gained attention from other mathematicians. In \cite{1990Nonmetrizable} and \cite{1996Nonmetrizable}, authors established an equivalent characterization in terms of the algebraic structure of the Stone– \v{C}ech compactification of discrete semigroups. This characterization directly implies that for any 2-partition of a central set in any semigroup, there must exist one cell which is still central. Then a natural question arises: whether a central set can be split into two disjoint central subsets? In the case of $(\mathbb{N}, +)$, \cite[Theorem 2.12]{2003Infinite} gave a positive answer. Hence we can immediately obtain that any central set in $\mathbb{N}$ can be split into infinite many central subsets. From this conclusion, a series of further questions appear, where the following three are worth to notice:
\begin{enumerate}
  \item How many almost disjoint central subsets does a given central set contain at most?
  \item Do infinite partition or almost disjoint properties of central sets still hold if the semigroup is uncountable?
  \item What about other combinatorial notions?
\end{enumerate}
Where if $X$ is an infinite set, we call $\mathcal{A}$ is a family of almost disjoint subsets of $X$ if and only if for each $A \in \mathcal{A}$, $A \subseteq X$ and $|A| = |X|$, and for any distinct $A, B \in \mathcal{A}$, $|A \cap B| < |X|$. This is a basic but important concept in combinatorial set theory, which often shows up with partition problems.
In \cite{2008Almost}, authors mainly investigated the above three questions and obtained several results. Where the most notable of these is that \cite[Corollary 3.4]{2008Almost} in any infinite very weakly cancellative semigroup of size $\kappa$, every central set can be split into $\kappa$ pairwise disjoint central subsets; and if $\kappa$ contains $\lambda$ almost disjoint subsets, then every central set contains $\lambda$ almost disjoint central subsets. This result answers the first two questions. With respect to the third question, authors\cite{2008Almost} studied several important notions: thick sets, very thick sets, piecewise syndetic sets and syndetic sets. They found that the statement for thick sets is the same as that for central sets; if the size of the semigroup is regular, then the statement holds for very thick sets; if the semigroup is left cancellative, then the statement holds for piecewise syndetic sets. While the situation of syndetic sets is more complicated, see \cite[Section 4]{2008Almost} for more details. In \cite{2024Partition}, the author investigated three other notions: quasi-central sets, $J$-sets and $C$-sets. Where the definition of the first one is similar to that of central sets, and the latter two notions are related to a stronger version of the central set theorem \cite{2008A}. And the author showed that quasi-central sets have the same statement as central sets; if the semigroup is commutative, then the statement holds for $C$-sets, and if the size of the semigroup $\kappa$ satisfies $\kappa^\omega = \kappa$, then the statement also holds for $J$-sets. Besides that, some mathematicians considered similar partition problems in groups. For example, in \cite[Chapter 3]{2003Ball}, authors showed that every infinite group can be partitioned into infinitely many sets that are both left and right syndetic. In \cite{2011Partitions, 2012Prethick}, authors studied the partition of groups into $\kappa$-thin subsets and not $k$-prethick subsets respectively. There are still many other studies with respect to this topic, such as \cite{2015Partitions}, we do not go into details.

In this article, we will continue this topic. To be precise, we mainly investigate four combinatorial notions: IP sets, combinatorially rich sets, $C_p$-sets and PP-rich sets. The first notion is related to the famous Hindman theorem\cite[Theorem 3.1]{1974Finite}, defined as follows:
\begin{definition}
  Suppose $(S, +)$ is a semigroup and $A \subseteq S$. $A$ is an IP set if there is a sequence $\langle x_n \rangle_{n=1}^\infty$ in $S$ such that $\mathrm{FS}(\langle x_n \rangle_{n=1}^\infty) \subseteq A$. 
\end{definition}
Where $\mathrm{FS}(\langle x_n \rangle_{n=1}^\infty) = \{\sum_{n \in H}x_n: H \in \mathcal{P}_f(\mathbb{N}) \}$, $\mathcal{P}_f(\mathbb{N})$ is the set of nonempty finite subsets of $\mathbb{N}$ and $\sum_{n \in H}x_n$ is the sum in increasing order of indices. From the definition we can see that IP sets contain rich additive structures. Actually this notion has an equivalent algebraic characterization: $A$ is an IP set in $S$ if and only if there exists an idempotent $p$ in $\beta S$ such that $A \in p$, where $\beta S$ is the Stone– \v{C}ech compactification of $S$ we will introduce below.

In Section 2, we shall discuss the infinite partition and almost disjoint properties of IP sets. Observe that the cardinalities of IP sets in uncountable semigroups are not unique, so we will study from two aspects. Firstly, we consider which properties hold for any IP set. And we will see that in any left weakly cancellative semigroup, every IP set contains $\omega$ pairwise disjoint IP subsets and $2^\omega$ almost disjoint IP subsets. So these conclusions are already optimal for countable IP sets. Secondly and naturally, when the IP set is known to be uncountable, we show that uncountable IP sets still have almost disjoint properties (Theorem \ref{IPADU}). However, we find that uncountable IP sets can not always be split into uncountably many IP cells. We provide a necessary and sufficient condition (Theorem \ref{NS}) and corresponding examples (e.g. Theorem \ref{EX}). 

The second notion was introduced by Bergelson and Glasscock\cite[Defninition 2.8]{2020On} in commutative semigroups. To show the definition of combinatorially rich sets more succinctly, let us introduce some notations first. For a set $X$ and a natural number $n \in \mathbb{N}$, we denote $\mathcal{P}_n(X)$ as the set of all size $n$ subsets of $X$ and denote ${^nX}$ as the set of all sequences of $X$ of length $n$. If $(S, +)$ is a semigroup, then for $n, m \in \mathbb{N}$, $L \in \mathcal{P}_n({^mS})$, $a \in S$ and nonempty $H \subseteq \{ 1, \ldots, m \}$, we denote $S_L(a, H) = \{ a + \sum_{t \in H}f(t): f \in L \}$.
\begin{definition}
  Suppose $(S, +)$ is a commutative semigroup and $A \subseteq S$. If there exists a sequence $\langle r_n \rangle_{n=1}^\infty$ in $\mathbb{N}$ such that for each $n \in \mathbb{N}$ and each $L \in \mathcal{P}_n({^{r_n}S})$, there exist $a \in S$ and nonempty $H \subseteq \{ 1, \ldots, r_n \}$ such that $S_L(a, H) \subseteq A$, then we say $A$ is a combinatorially rich set in $S$.
\end{definition}

This notion has already been shown in \cite{2020On} to contain an abundance of combinatorial patterns, and which was lifted up to arbitrary semigroups by Hindman et al.\cite{2023Combinatorially} recently. This notion has already known to have partition regularity \cite[Theorem 2.4]{2023Combinatorially}, that is, any 2-partition of a combinatorially rich set must contain one combinatorially rich cell. In Section 3, we will study its infinite partition and almost disjoint properties. And we finally obtain that in $(\mathbb{N}, +)$, any combinatorially rich set contains $\omega$ pairwise disjoint combinatorially rich subsets and $2^\omega$ almost disjoint combinatorially rich subsets. Unfortunately, we do not know whether analogous results hold in uncountable semigroups, so we list it as a question for future study. 

The latter two notions are studied in \cite{2023Polynomial}, where authors lifted $J$-sets and $C$-sets up to polynomial versions (called $J_p$-sets and $C_p$-sets) in $\mathbb{N}$, and established a polynomial version of the central set theorem for $(\mathbb{N}, +)$. PP-rich sets are a kind of notion related to $J_p$-sets, and are proved to have partition regularity\cite[Theorem 19]{2023Polynomial}. However, all these notions and relevant results, especially the polynomial version of the central set theorem, are in $(\mathbb{N}, +)$. Although authors in \cite{2023Polynomial} noted that most of them can be lifted up to the case of countable commutative semigroups, we still do not know whether there are uncountable versions of these combinatorial notions and corresponding results. 

So in Section 4, we shall extend the definitions of $J_p$-sets and $C_p$-sets to commutative cancellative semigroups (Definition \ref{defJC}). And then establish an uncountable polynomial version of the central set theorem (Theorem \ref{UPV}). Furthermore in the last section, we will show the infinite partition and almost disjoint results of $C_p$-sets in commutative cancellative semigroups (Theorem \ref{Cp}). For PP-rich sets (Definition \ref{PPR}), we obtain Theorem \ref{PPN} that in $(\mathbb{N}, +)$, any PP-rich set contains $\omega$ pairwise disjoint PP-rich subsets and $2^\omega$ almost disjoint PP-rich subsets. We also leave several questions in the end of the paper. We do not know the uncountable situation of PP-rich sets like combinatorially rich sets. The most tough notion to deal with is $J_p$-sets, we do not obtain any corresponding partition or almost disjoint results, even in $(\mathbb{N}, +)$. And the partition regularity of $J_p$-sets is also unknown, which is an open question in \cite{2023Polynomial} when the semigroup is $(\mathbb{N}, +)$ (in this situation, we obtain a partial answer Theorem \ref{partialA} that if $A$ is a $J_p$-set in $\mathbb{N}$ and $B$ is a finite subset of $\mathbb{N}$, then $A \setminus B$ is also a $J_p$-set). All these questions will be studied further in the future.

Now let us introduce some notions, notations and basic facts that we will refer to. Most of this information can be found in \cite{2012Algebra}. Given a discrete semigroup $(S, \cdot)$, $\beta S$ is the Stone-\v{C}ech compactification of $S$ and there is a natural extension of $\cdot$ to $\beta S$ making $\beta S$ a compact right topological semigroup.
For each $p \in \beta S$, the function $\rho_p: \beta S \rightarrow \beta S$, defined by $\rho_p(q) = q \cdot p$, is continuous, and for each $x \in S$,
$\lambda_x: \beta S \rightarrow \beta S$, defined by $\lambda_x(p) = x \cdot p$, is also continuous. The topological basis of $\beta S$ is $\{U_A: \emptyset \neq A \subseteq S \}$, where $U_A = \{p \in \beta S: A \in p \}$. The topological closure of a subset $X$ of $\beta S$ is denoted by $\overline{X}$. Then if $A \subseteq S$, it is easy to verifty that $\overline{A} = U_A$. It is known that $S$ is dense in $\beta S$. Given a compact right topological semigroup $(S, +)$, it has a smallest ideal $K(S)$, which is the union of all minimal left ideals of $S$ and also the union of all minimal right ideals of $S$. An idempotent $u \in S$ satisfies $u + u =u$; and if the idempotent $u \in K(S)$, $u$ is called minimal.

Let $(S, +)$ be a semigroup, $k \in \mathbb{N}$ and $\langle x_n \rangle_{n=1}^\infty$, $\langle y_n \rangle_{n=1}^\infty$ and $\langle x_n \rangle_{n=1}^k$ be three sequences in $S$. We say $\langle y_n \rangle_{n=1}^\infty$ is a sum subsystem of $\langle x_n \rangle_{n=1}^\infty$ if there exists a sequence $\langle H_n \rangle_{n=1}^\infty$ in $\mathcal{P}_f(\mathbb{N})$ such that for any $n \in \mathbb{N}$, $\max H_n < \min H_{n+1}$ and $y_n = \sum_{t \in H_n}x_t$. We have already defined $\mathrm{FS}(\langle x_n \rangle_{n=1}^\infty)$ in the above, the definition of $\mathrm{FS}(\langle x_n \rangle_{n=1}^k)$ is analogous to that of $\mathrm{FS}(\langle x_n \rangle_{n=1}^\infty)$.

Let $(S, +)$ be a semigroup. If $\mathcal{A}$ is a family of subsets of $S$, we say $\mathcal{A}$ satisfies partition regularity whenever for any $A \in \mathcal{A}$ and any 2-partition of $A$, there must exist one cell belonging to $\mathcal{A}$. A subset $A$ of $S$ is called a left solution set of $S$ (respectively, a right solution set of $S$) if there are $a, b \in S$ such that $A = \{x \in S: a + x = b \}$ (respectively, $A = \{x \in S: x + a = b \}$). Let $S$ be an infinite semigroup with size $\kappa$. We say $S$ is very weakly left cancellative (respectively, very weakly right cancellative) if the union of less than $\kappa$ left solution sets of $S$ (respectively, right solution sets of $S$) has size less than $\kappa$. We say $S$ is very weakly cancellative if it is both very weakly left cancellative and very weakly right cancellative. We say $S$ is weakly left cancellative (respectively, weakly right cancellative) if every left solution set (respectively, right solution set) is finite. And we say $S$ is left cancellative (respectively, right cancellative) if every left solution set (respectively, right solution set) has size $\leq 1$.

The definitions (or equivalent characterizations) of central sets, $J$-sets, $C$-sets and piecewise syndetic sets see \cite[Definition 4.42]{2012Algebra}, \cite[Definition 14.8.1, Definition 14.14.1(b), Theorem 14.14.7]{2012Algebra}, \cite[Definition 14.8.5, Definition 14.14.1 (d), Theorem 14.15.1]{2012Algebra} and \cite[Definition 4.38, Theorem 4.40]{2012Algebra}, respectively.

\section{IP sets}
In this section, we assume that semigroups have no idempotent, this assumption guarantees that every IP set is infinite, since we do not want to deal with IP sets of size 1 which are trivial. Observe that in this situation, for any IP set $A$ in a semigroup $(S, +)$, there exists an injective sequence $\langle x_n \rangle_{n=1}^\infty$ in $S$ such that $\mathrm{FS}(\langle x_n \rangle_{n=1}^\infty) \subseteq A$.

Then we consider the partition and almost disjoint problems of IP sets. Observe that IP sets can be countable and uncountable if the semigroups is uncountable, so we need to discuss these problems from two cases. First let us focus on the situation that the cardinality of IP sets is unknown, that is, find the partition and almost disjoint properties which are satisfied by any IP set. Based on this question, we obtain the following two results (Theorem \ref{IPctbl} and Theorem \ref{IPctblad}), which actually can be proved by minor modifications of the proof of \cite[Theorem 2.3]{2024Partition}. But here we provide a combinatorial argument, respectively.
\begin{theorem}\label{IPctbl}
  Suppose $(S, +)$ is an infinite left weakly cancellative semigroup and $A$ is an IP set in $S$. Then $A$ can be split into $\omega$ IP subsets.
\end{theorem}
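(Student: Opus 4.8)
The plan is to produce countably many pairwise disjoint IP subsets of $A$ and then absorb the leftover elements into one of them. Since $S$ has no idempotent, I first fix an injective sequence $\langle x_n\rangle_{n=1}^\infty$ with $\mathrm{FS}(\langle x_n\rangle_{n=1}^\infty)\subseteq A$. It then suffices to construct pairwise disjoint sets $B_1,B_2,\ldots\subseteq A$, each an IP set: for then $A_1=B_1\cup\bigl(A\setminus\bigcup_{k\ge 1}B_k\bigr)$ together with $A_k=B_k$ for $k\ge 2$ partition $A$ into $\omega$ cells, and each $A_k\supseteq B_k$ is IP because any superset of an IP set is an IP set. Thus the whole problem reduces to the disjoint construction, and the $B_k$ are genuinely distinct since IP sets are infinite under the standing no-idempotent assumption.

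For the construction I would take each generator to be a single term of the original sequence. Choosing an injection $\phi:\mathbb{N}\times\mathbb{N}\to\mathbb{N}$ and setting $y_n^{(k)}=x_{\phi(k,n)}$, every finite sum $\sum_{n\in F}y_n^{(k)}$ equals $\sum_{t\in G}x_t$ for the finite index set $G=\{\phi(k,n):n\in F\}$, hence lies in $\mathrm{FS}(\langle x_n\rangle_{n=1}^\infty)\subseteq A$. Therefore $B_k=\mathrm{FS}(\langle y_n^{(k)}\rangle_{n=1}^\infty)\subseteq A$ is automatically an IP set. The only thing left to arrange is that the values $\phi(k,n)$ can be selected, along a diagonal enumeration of the pairs $(k,n)$, so that the resulting FS-sets are pairwise disjoint.

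The disjointness is the heart of the argument, and this is exactly where left weak cancellativity enters. I would build the indices $\phi(k,n)$ one at a time in a diagonal order on $\mathbb{N}\times\mathbb{N}$, always choosing a fresh index larger than every index used so far, so that the new generator sits last in every sum in which it appears. When I place a new generator $w=x_N$ into sequence $k$, the new elements of $B_k$ are $w$ itself together with the sums $z+w$, where $z$ ranges over the finitely many elements of the part of $B_k$ built so far. To keep $B_k$ disjoint from the other sequences I force each such new element to avoid the finite set $D$ of all elements of the other $B_j$ produced up to this stage. The condition $w\notin D$ excludes only finitely many values of $w$, hence finitely many indices $N$. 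For each fixed nonempty-sum $z$ and each $d\in D$, the set $\{w:z+w=d\}$ is a left solution set, so it is finite by left weak cancellativity; consequently $\{N:z+x_N\in D\}$ is finite. Taking the union over the finitely many $z$ and the finitely many $d\in D$ still leaves only finitely many forbidden indices, so a legitimate fresh $N$ always exists.

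Carrying this out for every pair $(k,n)$ yields sequences whose FS-sets are pairwise disjoint: any two elements coming from different sequences are separated at the stage where the later of their two generators is chosen, by the diagonal bookkeeping. This produces the required $B_k$, and the absorption step above delivers the partition. The single delicate point is the finiteness of $\{N:z+x_N\in D\}$; everything else is routine diagonalization, so left weak cancellativity is precisely the hypothesis that makes the potential collisions controllable.
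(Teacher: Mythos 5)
Your proposal is correct and uses essentially the same mechanism as the paper: generators are single terms of the original sequence, and left weak cancellativity makes each collision set $\{w : z+w=d\}$ finite so that a fresh generator avoiding all previously built elements always exists. The only difference is bookkeeping --- you build all $\omega$ sequences simultaneously by diagonalization and absorb the leftover explicitly, whereas the paper splits into two IP subsets and iterates --- which does not change the substance of the argument.
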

\begin{proof}
  It is enough to show that $A$ can be split into two IP subsets. Since $A$ is IP and the assumption that $S$ has no idempotent, we can take an injective sequence $\langle a_n \rangle_{n=1}^\infty$ such that $\mathrm{FS}(\langle a_n \rangle_{n=1}^\infty) \subseteq A$. Let $B = \{ a_n: n \in \mathbb{N} \}$. Take $x_1 \in B$ and $y_1 \in B \setminus \{ x_1 \}$. Assume $k \in \mathbb{N}$ and we have obtained $\langle x_n \rangle_{n=1}^k$ and $\langle y_n \rangle_{n=1}^k$ such that $\mathrm{FS}(\langle x_n \rangle_{n=1}^k) \cap \mathrm{FS}(\langle y_n \rangle_{n=1}^k) = \emptyset$ and $\mathrm{FS}(\langle x_n \rangle_{n=1}^k) \cup \mathrm{FS}(\langle y_n \rangle_{n=1}^k) \subseteq A$. Then let $T_1 = \{ x \in S: \exists z_1 \in \mathrm{FS}(\langle y_n \rangle_{n=1}^k) \exists z_2 \in \mathrm{FS}(\langle x_n \rangle_{n=1}^k) (z_1 = z_2 + x) \}$. Since $S$ is left weakly cancellative, $T_1$ is a finite set. Then take $x_{k+1} \in B \setminus (\mathrm{FS}(\langle x_n \rangle_{n=1}^k) \cup \mathrm{FS}(\langle y_n \rangle_{n=1}^k) \cup T_1)$. Similarly, let $T_2 = \{ y \in S: \exists z_1 \in \mathrm{FS}(\langle x_n \rangle_{n=1}^{k+1}) \exists z_2 \in \mathrm{FS}(\langle y_n \rangle_{n=1}^k) (z_1 = z_2 + y) \}$ so $T_2$ is finite. Take $y_{k+1} \in B \setminus (\mathrm{FS}(\langle x_n \rangle_{n=1}^{k+1}) \cup \mathrm{FS}(\langle y_n \rangle_{n=1}^k) \cup T_2)$. 
  
  Obviously $\mathrm{FS}(\langle x_n \rangle_{n=1}^{k+1}) \cup \mathrm{FS}(\langle y_n \rangle_{n=1}^{k+1}) \subseteq A$. Assume that there is $z \in \mathrm{FS}(\langle x_n \rangle_{n=1}^{k+1}) \cap \mathrm{FS}(\langle y_n \rangle_{n=1}^{k+1})$. If $z = z^\prime + y_{k+1}$ for some $z^\prime \in \mathrm{FS}(\langle y_n \rangle_{n=1}^k$, then $y_{k+1} \in T_2$, contradiction. So either $z = y_{k+1}$ or $z \in \mathrm{FS}(\langle y_n \rangle_{n=1}^k$. If the former holds, according to the choice of $y_{k+1}$ we have $z \notin \mathrm{FS}(\langle x_n \rangle_{n=1}^{k+1})$, contradiction. Hence $z \in \mathrm{FS}(\langle y_n \rangle_{n=1}^k$. Since $z \in \mathrm{FS}(\langle x_n \rangle_{n=1}^{k+1})$, if $z \in \mathrm{FS}(\langle x_n \rangle_{n=1}^k)$, then $\mathrm{FS}(\langle x_n \rangle_{n=1}^k) \cap \mathrm{FS}(\langle y_n \rangle_{n=1}^k) \neq \emptyset$, contradicting to the inductive hypothesis; if $z = x_{k+1}$, then according to the choice of $x_{k+1}$ we have $z \notin \mathrm{FS}(\langle y_n \rangle_{n=1}^k$, contradiction. So there is only one case: $z = z^\prime + x_{k+1}$ for some $z^\prime \in \mathrm{FS}(\langle x_n \rangle_{n=1}^k$. But this case deduces that $x_{k+1} \in T_1$, which is also a contradiction. Therefore, $\mathrm{FS}(\langle x_n \rangle_{n=1}^{k+1}) \cap \mathrm{FS}(\langle y_n \rangle_{n=1}^{k+1}) = \emptyset$.
  
  Finally, we have $A_1 = \mathrm{FS}(\langle x_n \rangle_{n=1}^\infty)$ and $A_2 = \mathrm{FS}(\langle y_n \rangle_{n=1}^\infty)$. It is easy to see that $A_1 \cup A_2 \subseteq A$ and $A_1 \cap A_2 = \emptyset$.
\end{proof}

Before showing the almost disjoint result, we need to introduce a kind of special sequence.
\begin{definition}
  Suppose $(S, +)$ is a semigroup and $\langle x_n \rangle_{n=1}^\infty$ (respectively, $\langle x_n \rangle_{n=1}^k$ for some $k \in \mathbb{N}$) is a sequence in $S$. We say $\langle x_n \rangle_{n=1}^\infty$ (respectively, $\langle x_n \rangle_{n=1}^k$) satisfies finiteness of finite sums if for any nonempty $H_1, H_2 \in \mathcal{P}_f(\mathbb{N})$ (respectively, $H_1, H_2 \subseteq \{ 1, \ldots, k \}$), whenever $\max H_1 \neq \max H_2$, one must have $\sum_{n \in H_1}x_n \neq \sum_{n \in H_2}x_n$.
\end{definition}
From the definition, we can see that if $\langle x_n \rangle_{n=1}^\infty$ satisfies finiteness of finite sums, then for each $z \in \mathrm{FS}(\langle x_n \rangle_{n=1}^\infty)$, there are only finitely many $H \in \mathcal{P}_f(\mathbb{N})$ such that $z = \sum_{n \in H}x_n$. This explains the origin of the name ``finiteness of finite sums''. Actually this notion is similar to uniqueness of finite sums of sequences \cite[Page 3]{2008Largeness}, which needs $H$ to be unique for each $z$. We have the following property with respect to finiteness of finite sums.
\begin{lemma}\label{FFS}
  Suppose $(S, +)$ is an infinite left weakly cancellative semigroup and $\langle x_n \rangle_{n=1}^\infty$ is a sequence in $S$. There exists a sum subsystem $\langle y_n \rangle_{n=1}^\infty$ of $\langle x_n \rangle_{n=1}^\infty$ satisfying finiteness of finite sums.
\end{lemma}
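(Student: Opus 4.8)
The plan is to build $\langle y_n\rangle_{n=1}^\infty$ by induction, taking each $y_n$ to be a block sum $\sum_{t\in H_n}x_t$ over consecutive blocks $H_1<H_2<\cdots$ of indices (i.e. $\max H_n<\min H_{n+1}$), and to maintain at every finite stage the conclusion of finiteness of finite sums. The observation that organizes the whole argument is that the property only constrains pairs of subsets with \emph{different} maxima: two sums sharing a maximal index are permitted to coincide. Hence, when passing from $\langle y_n\rangle_{n=1}^k$ to $\langle y_n\rangle_{n=1}^{k+1}$, the only genuinely new requirement is that every sum whose largest index is $k+1$ differs from every sum whose largest index is at most $k$. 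For the base case $k=1$ the condition is vacuous, so $y_1:=x_1$ suffices.

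Concretely, suppose $\langle y_n\rangle_{n=1}^k$ has been built from blocks $H_1<\cdots<H_k$ and satisfies finiteness of finite sums. Writing a sum with maximal index $k+1$ in increasing order of indices, it has the form $y_{k+1}$ or $s'+y_{k+1}$ with $s'\in\mathrm{FS}(\langle y_n\rangle_{n=1}^k)$, and I want each such element to avoid $\mathrm{FS}(\langle y_n\rangle_{n=1}^k)$, which is exactly the set of all sums with maximal index $\le k$. The constraint $s'+y_{k+1}\neq s$ for old sums $s',s$ is precisely the statement that $y_{k+1}$ lies outside the left solution set $\{x\in S: s'+x=s\}$; by left weak cancellativity each such set is finite, and there are only finitely many pairs $(s',s)$ because $\mathrm{FS}(\langle y_n\rangle_{n=1}^k)$ is finite. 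Together with the finitely many constraints $y_{k+1}\neq s$, this shows the set $F$ of forbidden values of $y_{k+1}$ is finite.

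It then remains to produce an admissible block sum avoiding $F$. Here I use that $\mathrm{FS}(\langle x_n\rangle_{n=M+1}^\infty)$, where $M=\max H_k$, is itself an IP set, hence infinite under the standing Section 2 assumption that $S$ has no idempotent; therefore it is not contained in the finite set $F$, and I may choose a finite $H_{k+1}\subseteq\{M+1,M+2,\ldots\}$ with $\min H_{k+1}>M$ and $y_{k+1}:=\sum_{t\in H_{k+1}}x_t\notin F$. Checking that $\langle y_n\rangle_{n=1}^{k+1}$ still satisfies finiteness of finite sums then splits into the case where both maxima are $\le k$ (handled by the inductive hypothesis) and the case where one maximum equals $k+1$ (handled by $y_{k+1}\notin F$), using that $\max H_1\neq\max H_2$ forces at most one of the two subsets to contain $k+1$. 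The resulting sequence $\langle y_n\rangle_{n=1}^\infty$ is a sum subsystem of $\langle x_n\rangle_{n=1}^\infty$ satisfying finiteness of finite sums.

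I expect the main obstacle to be this last selection step: guaranteeing that a fresh block sum avoiding the finite forbidden set $F$ always exists. Individual generators $x_{M+1},x_{M+2},\ldots$ need not suffice, since they may repeat inside $F$; it is the infinitude of the tail finite-sum set — equivalently, the no-idempotent hypothesis forcing every IP set to be infinite — that makes the choice possible, and dropping that hypothesis is precisely where the argument would break. Note also that no commutativity is required, because writing sums in increasing order of indices always places the new generator $y_{k+1}$ as the rightmost summand, so the reduction to left solution sets is legitimate.
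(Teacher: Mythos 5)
Your proposal is correct and follows essentially the same route as the paper's proof: the same block-sum induction, the same finite forbidden set (your $F$ is exactly the paper's $Y\cup\mathrm{FS}(\langle y_n\rangle_{n=1}^k)$, with $Y$ finite by left weak cancellativity), and the same case split on whether the larger maximum equals $k+1$. Your explicit justification that the tail set $\mathrm{FS}(\langle x_n\rangle_{n=M+1}^\infty)$ is infinite via the no-idempotent standing assumption is a point the paper leaves implicit, but it is the same argument.
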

\begin{proof}
  Take $H_1 = \{ 1 \}$ and $y_1 = x_1$. Assume $k \in \mathbb{N}$ and we have obtained $\langle H_n \rangle_{n=1}^k$ in $\mathcal{P}_f(\mathbb{N})$ and $\langle y_n \rangle_{n=1}^k$ in $S$ such that for each $n \in \{ 1, \ldots, k \}$, $y_n = \sum_{t \in H_n}x_t$ and if $n < k$, $\max H_n < \min H_{n+1}$, and $\langle y_n \rangle_{n=1}^k$ satisfies finiteness of finite sums. Let $Y = \{ y \in S: \exists z_1, z_2 \in \mathrm{FS}(\langle y_n \rangle_{n=1}^k) (z_1 + y = z_2) \}$. Since $S$ is left weakly cancellative, $Y$ is finite. Let $M = \max H_k$. Then take $y_{k+1} \in \mathrm{FS}(\langle x_n \rangle_{n=M+1}^\infty) \setminus (Y \cup \mathrm{FS}(\langle y_n \rangle_{n=1}^k))$. So there is $H_{k+1} \in \mathcal{P}_f(\mathbb{N})$ such that $y_{k+1} = \sum_{t \in H_{k+1}}x_t$ and $\min H_{k+1} > \max H_k$.
  
  Let us verify that $\langle y_n \rangle_{n=1}^{k+1}$ satisfies finiteness of finite sums. Take $G_1, G_2 \subseteq \{ 1, \ldots, k+1 \}$ satisfying $\max G_1 < \max G_2$. If $\max G_2 < k+1$, then $\sum_{n \in G_1}y_n \neq \sum_{n \in G_2}y_n$ by inductive hypothesis; if $\max G_2 = k+1$ and $|G_2| > 1$, then $G_2 = G_3 \cup \{ k+1 \}$ for some $G_3 \subseteq \{ 1, \ldots, k \}$. Assume $\sum_{n \in G_1}y_n = \sum_{n \in G_2}y_n$, then $\sum_{n \in G_1}y_n = \sum_{n \in G_3}y_n + y_{k+1}$, it turns out that $y_{k+1} \in Y$, contradiction, so $\sum_{n \in G_1}y_n \neq \sum_{n \in G_2}y_n$; otherwise, $G_2 = \{ k+1 \}$, according to the choice of $y_{k+1}$ we have $\sum_{n \in G_2}y_n = y_{k+1} \neq \sum_{n \in G_1}y_n$. Therefore, $\langle y_n \rangle_{n=1}^{k+1}$ satisfies finiteness of finite sums.
  
  Finally, we obtain a sum subsystem $\langle y_n \rangle_{n=1}^\infty$ of $\langle x_n \rangle_{n=1}^\infty$ satisfying finiteness of finite sums.
\end{proof}

Now that we have this lemma, we can incidentally get the following result with respect to uniqueness of finite sums, although we will not apply it.
\begin{theorem}
  Suppose $(S, +)$ is an infinite left weakly cancellative and right cancelltive semigroup and $\langle x_n \rangle_{n=1}^\infty$ is a sequence in $S$. There exists a sum subsystem $\langle y_n \rangle_{n=1}^\infty$ of $\langle x_n \rangle_{n=1}^\infty$ satisfying uniqueness of finite sums.
\end{theorem}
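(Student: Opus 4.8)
The plan is to first reduce to a sequence with the weaker property already guaranteed by Lemma~\ref{FFS}, and then upgrade it to uniqueness of finite sums using right cancellativity together with the standing assumption of this section that $S$ has no idempotent. Concretely, I would begin by applying Lemma~\ref{FFS} to $\langle x_n \rangle_{n=1}^\infty$ to obtain a sum subsystem $\langle y_n \rangle_{n=1}^\infty$ of $\langle x_n \rangle_{n=1}^\infty$ satisfying finiteness of finite sums. Since any sum subsystem of $\langle y_n \rangle_{n=1}^\infty$ is again a sum subsystem of $\langle x_n \rangle_{n=1}^\infty$, it suffices to prove that this particular $\langle y_n \rangle_{n=1}^\infty$ already satisfies uniqueness of finite sums, i.e.\ that distinct nonempty $H_1, H_2 \in \mathcal{P}_f(\mathbb{N})$ always yield $\sum_{n \in H_1} y_n \neq \sum_{n \in H_2} y_n$.

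I would argue by contradiction, choosing distinct nonempty $H_1, H_2 \in \mathcal{P}_f(\mathbb{N})$ with $\sum_{n \in H_1} y_n = \sum_{n \in H_2} y_n$ and with $|H_1| + |H_2|$ as small as possible. Finiteness of finite sums forces $\max H_1 = \max H_2 =: m$, so I write $H_1 = H_1' \cup \{ m \}$ and $H_2 = H_2' \cup \{ m \}$ with $H_1', H_2' \subseteq \{ 1, \ldots, m-1 \}$. Since the sums are taken in increasing order of indices, the offending equation becomes $\sum_{n \in H_1'} y_n + y_m = \sum_{n \in H_2'} y_n + y_m$, with the convention that an empty $H_i'$ simply drops the first summand. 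If both $H_1'$ and $H_2'$ are empty then $H_1 = H_2 = \{ m \}$, contradicting $H_1 \neq H_2$; if both are nonempty, right cancellativity cancels $y_m$ and produces a strictly smaller pair $H_1' \neq H_2'$ with equal sums, contradicting minimality.

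The one genuinely delicate case, and the place where the no-idempotent hypothesis is essential, is when exactly one of $H_1', H_2'$ is empty, say $H_2' = \emptyset$ and $H_1' \neq \emptyset$. Then the equation reads $z + y_m = y_m$ with $z = \sum_{n \in H_1'} y_n$. Consequently $(z + z) + y_m = z + (z + y_m) = z + y_m$, and right cancelling $y_m$ yields $z + z = z$, exhibiting an idempotent in $S$ and contradicting the standing assumption of the section. This rules out the last case, so no counterexample exists and $\langle y_n \rangle_{n=1}^\infty$ satisfies uniqueness of finite sums.

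I expect the main obstacle to be precisely this empty-tail case: it is the only point where cancellativity alone is insufficient and the absence of idempotents must be invoked, and one must be careful that the reduction in the two-nonempty-tails case keeps $H_1'$ and $H_2'$ both distinct and nonempty so that the minimality argument actually closes. Everything else is a routine peeling-off of the largest index combined with a single right cancellation.
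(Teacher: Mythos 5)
Your proof is correct and follows essentially the same route as the paper's: use Lemma~\ref{FFS} to force equal maxima, strip the top term by right cancellativity, and invoke the section's no-idempotent assumption in the degenerate case where one index set is exhausted. The only difference is organizational (a minimal-counterexample induction versus the paper's simultaneous peeling of both sequences), and your handling of the empty-tail case via $z+z=z$ is exactly the paper's final step.
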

\begin{proof}
  Build a sum subsystem $\langle y_n \rangle_{n=1}^\infty$ of $\langle x_n \rangle_{n=1}^\infty$ as the same way of the proof of Lemma \ref{FFS}. So $\langle y_n \rangle_{n=1}^\infty$ satisfies finiteness of finite sums. Let us verify that it also satisfies uniqueness of finite sums. 
  
  Assume there exist two distinct $T, G \in \mathcal{P}_f(\mathbb{N})$ such that $\sum_{n \in T}y_n = \sum_{n \in G}y_n$. Write $T = \{ t_1, \ldots, t_k \}$ and $G = \{ g_1, \ldots, g_l \}$ for some $k , l \in \mathbb{N}$ such that $t_i < t_j$ and $g_i < g_j$ whenever $i < j$. If $k = l$, then by definition of finiteness of finite sums, we have $t_k = g_k$. So by right cancellative law of $S$, we have $\sum_{i=1}^{k-1}y_{t_i} = \sum_{i=1}^{k-1}y_{g_i}$ if $k > 1$. By the same argument, we can obtain $t_{k-1} = g_{k-1}$ and $\sum_{i=1}^{k-2}y_{t_i} = \sum_{i=1}^{k-2}y_{g_i}$ if $k > 2$. After finite steps, we will get that $t_i = g_i$ for each $i \in \{ 1, \ldots, k \}$, contradiction. So $k \neq l$, $k < l$ says. Again by the same argument, we obtain $t_k = g_l$, $t_{k-1} = g_{l-1}, \ldots$ $t_1 = g_{l-k+1}$. Notice that $\sum_{i=1}^{k}y_{t_i} = \sum_{i=1}^{l-k}y_{g_i} + \sum_{i=l-k+1}^{l}y_{g_i}$, so we have $y = z + y$ where $y = \sum_{i=1}^{k}y_{t_i}$ and $z = \sum_{i=1}^{l-k}y_{g_i}$. Then $z + y = z + (z + y)$, by right cancellative law of $S$ we have $z = z + z$, which contradicts with the assumption that $S$ has no idempotent. Therefore, $\langle y_n \rangle_{n=1}^\infty$ of $\langle x_n \rangle_{n=1}^\infty$ satisfies uniqueness of finite sums. 
\end{proof}

Then we have the following main result:
\begin{theorem}\label{IPctblad}
  Suppose $(S, +)$ is an infinite left weakly cancellative semigroup and $A$ is an IP set in $S$. Then $A$ contains $2^\omega$ almost disjoint IP subsets.
\end{theorem}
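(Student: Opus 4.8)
The plan is to reduce everything to a single well-behaved $\mathrm{FS}$-sequence and then transport a classical almost disjoint family on $\mathbb{N}$ through it. Since $A$ is an IP set and $S$ has no idempotent, I would first fix an injective sequence $\langle a_n \rangle_{n=1}^\infty$ with $\mathrm{FS}(\langle a_n \rangle_{n=1}^\infty) \subseteq A$, and then apply Lemma \ref{FFS} to obtain a sum subsystem $\langle x_n \rangle_{n=1}^\infty$ satisfying finiteness of finite sums. Because every finite sum of a sum subsystem is itself a finite sum of the original sequence, we still have $\mathrm{FS}(\langle x_n \rangle_{n=1}^\infty) \subseteq \mathrm{FS}(\langle a_n \rangle_{n=1}^\infty) \subseteq A$; moreover, finiteness of finite sums forces the $x_n$ to be pairwise distinct. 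The point of this step is that from now on I only work inside the countable set $W = \mathrm{FS}(\langle x_n \rangle_{n=1}^\infty)$, in which every element remembers the maximal index of any block representing it.

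Next I would invoke the classical fact that $\mathbb{N}$ carries an almost disjoint family $\{ B_\alpha : \alpha < 2^\omega \}$ of infinite subsets, i.e. $|B_\alpha| = \omega$ for all $\alpha$ and $|B_\alpha \cap B_\beta| < \omega$ whenever $\alpha \neq \beta$. For each $\alpha$ I set $A_\alpha = \mathrm{FS}(\langle x_n \rangle_{n \in B_\alpha})$, the set of finite sums over blocks $H \in \mathcal{P}_f(B_\alpha)$. Each $A_\alpha$ is an IP subset of $A$, being the $\mathrm{FS}$-set of the injective subsequence indexed by $B_\alpha$ and contained in $W \subseteq A$, and it is infinite since it already contains the distinct elements $\{ x_n : n \in B_\alpha \}$. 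Thus $\{ A_\alpha : \alpha < 2^\omega \}$ is a family of $2^\omega$ countable IP subsets of $A$, all living inside the single countable set $W$.

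The crucial step, and the place where finiteness of finite sums does the real work, is to show $A_\alpha \cap A_\beta$ is finite for $\alpha \neq \beta$. Suppose $z \in A_\alpha \cap A_\beta$, say $z = \sum_{n \in H} x_n = \sum_{n \in G} x_n$ with $H \in \mathcal{P}_f(B_\alpha)$ and $G \in \mathcal{P}_f(B_\beta)$. By finiteness of finite sums, $\max H = \max G$, so this common maximal index lies in $B_\alpha \cap B_\beta$. If $B_\alpha \cap B_\beta = \emptyset$ this is impossible, so $A_\alpha \cap A_\beta = \emptyset$; otherwise, putting $N = \max(B_\alpha \cap B_\beta)$, every such $H$ satisfies $H \subseteq \{ 1, \ldots, N \}$, so there are at most $2^N$ possibilities for $H$ and hence for $z$. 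In either case $A_\alpha \cap A_\beta$ is finite, so $\{ A_\alpha : \alpha < 2^\omega \}$ is an almost disjoint family of IP subsets of $A$ (as a family of subsets of the countable set $W$), as required.

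I expect the only genuinely delicate point to be this last intersection estimate, which is exactly why Lemma \ref{FFS} was established first: without control over the maximal indices of representing blocks, a single element of $W$ could in principle be reached through blocks of arbitrarily large index in both $B_\alpha$ and $B_\beta$, and the almost disjointness of the index sets would yield no information. Everything else — the passage to a good subsequence, the existence of the almost disjoint family on $\mathbb{N}$, and the verification that each $A_\alpha$ is an infinite IP set — is routine.
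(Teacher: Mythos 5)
Your proposal is correct and follows essentially the same route as the paper: pass to a sum subsystem with finiteness of finite sums via Lemma \ref{FFS}, push a classical almost disjoint family on $\mathbb{N}$ through the $\mathrm{FS}$ operation, and use the matching of maximal indices to bound the intersections. The only cosmetic difference is that you index the almost disjoint family by subsets of $\mathbb{N}$ while the paper uses subsets of $\{y_n : n \in \mathbb{N}\}$, which is the same thing since the sequence is injective.
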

\begin{proof}
  Since $A$ is an IP set, we take a sequence $\langle x_n \rangle_{n=1}^\infty$ such that $\mathrm{FS}(\langle x_n \rangle_{n=1}^\infty) \subseteq A$. By Lemma \ref{FFS}, take a sum subsystem $\langle y_n \rangle_{n=1}^\infty$ of $\langle x_n \rangle_{n=1}^\infty$ satisfying finiteness of finite sums. Notice that $\langle y_n \rangle_{n=1}^\infty$ is an injective sequence, so by \cite[Chapter II, Theorem 1.3]{1980Set} take a family $\{ B_\alpha: \alpha < 2^\omega \}$ of $2^\omega$ almost disjoint subsets of $\{ y_n: n \in \mathbb{N} \}$. Let $A_\alpha = \mathrm{FS}(B_\alpha)$ for each $\alpha < 2^\omega$. So for each $\alpha < 2^\omega$, $A_\alpha$ is an IP set in $S$, and since $B_\alpha \subseteq \{ y_n: n \in \mathbb{N} \}$, we have $A_\alpha \subseteq \mathrm{FS}(\langle y_n \rangle_{n=1}^\infty) \subseteq \mathrm{FS}(\langle x_n \rangle_{n=1}^\infty) \subseteq A$. For any $\alpha < \beta < 2^\omega$, if $z \in A_\alpha \cap A_\beta$, then there exist $H_1, H_2 \in \mathcal{P}_f(\mathbb{N})$ such that $z = \sum_{n \in H_1}y_n = \sum_{n \in H_2}y_n$ and $\{ y_n: n \in H_1 \} \subseteq B_\alpha$ and $\{ y_n: n \in H_2 \} \subseteq B_\beta$. Since $\langle y_n \rangle_{n=1}^\infty$ satisfies finiteness of finite sums, $\max H_1 = \max H_2$, we denote $h = \max H_1$. So $y_h \in B_\alpha \cap B_\beta$. While $|B_\alpha \cap B_\beta| < \omega$, so there are only finite possible values of $h$, which implies that such $H_1$ and $H_2$ are also finitely many. Hence $|A_\alpha \cap A_\beta| < \omega$.
  
  So $\{ A_\alpha: \alpha < 2^\omega \}$ is as desired.
\end{proof}

We can easily see that the conclusions of Theorem \ref{IPctbl} and Theorem \ref{IPctblad} are already optimal for countable IP sets. So it is natural to ask whether uncountable IP sets have better properties. For this question, first we have the following result with respect to uncountable partition.
\begin{theorem}\label{NS}
  Suppose $S$ is an uncountable semigroup. Every uncountable IP set can be split into uncountably many IP subsets if and only if there is no uncountable non-IP set in $S$.
\end{theorem}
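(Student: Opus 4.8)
The plan is to prove the two implications separately: the sufficiency (no uncountable non-IP set implies the splitting property) is a short cardinal-arithmetic argument, while the necessity is best handled by contraposition. Throughout I would lean on the fact that IP is upward closed, i.e.\ any superset of an IP set is IP (since it still contains the relevant $\mathrm{FS}$-sequence), equivalently every subset of a non-IP set is non-IP.

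For the direction assuming there is no uncountable non-IP set, I would start from an arbitrary uncountable IP set $A$, say $|A| = \kappa \geq \omega_1$. Since $\omega_1 \cdot \kappa = \kappa$, I can fix a bijection between $A$ and $\omega_1 \times \kappa$ and let $A_\alpha$ be the preimage of $\{\alpha\} \times \kappa$ for each $\alpha < \omega_1$. This exhibits $A$ as a disjoint union of $\omega_1$ subsets, each of size $\kappa$ and hence uncountable. By hypothesis every uncountable subset of $S$ is IP, so every $A_\alpha$ is IP, and $\{A_\alpha : \alpha < \omega_1\}$ is the desired splitting of $A$ into uncountably many IP subsets.

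For the converse I would argue contrapositively: assuming there is an uncountable non-IP set $C$, I will produce an uncountable IP set admitting no splitting into uncountably many IP subsets. First I note that $S$ is itself IP: since $\beta S$ is a compact right topological semigroup it contains an idempotent $p$, and $S \in p$ forces $S$ to be IP, so I may fix a \emph{countable} IP set $B = \mathrm{FS}(\langle x_n \rangle_{n=1}^\infty) \subseteq S$. Set $A = B \cup C$; then $A$ is IP (it contains $B$) and uncountable (it contains $C$). Suppose toward a contradiction that $A = \bigsqcup_{i \in I} A_i$ with $I$ uncountable and every $A_i$ an IP set.

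The heart of the argument is then a pigeonhole on the countable ``IP core'' $B$. The traces $A_i \cap B$ are pairwise disjoint subsets of the countable set $B$, so at most countably many of them are nonempty; since $I$ is uncountable there is some $i_0 \in I$ with $A_{i_0} \cap B = \emptyset$. Then $A_{i_0} \subseteq A \setminus B \subseteq C$, and as $A_{i_0}$ is IP, upward closure forces $C$ to be IP, contradicting the choice of $C$. This completes the contrapositive. The only steps needing a little care are the existence of a countable IP set inside $S$ and the upward closure of IP; the genuine idea — and what I would flag as the crux — is the observation that the countable IP seed $B$ can feed only countably many of the pieces, so the uncountable remainder is trapped inside the non-IP set $C$. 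Once that is isolated, no further obstacle remains.
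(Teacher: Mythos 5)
Your proposal is correct and follows essentially the same route as the paper: the sufficiency is the easy cardinality splitting, and the necessity is the contrapositive via $A = B \cup C$ with $B$ a countable IP seed and $C$ the uncountable non-IP set, where the pigeonhole on the countable $B$ forces some cell of any uncountable splitting into $C$. The paper states this more tersely (it simply asserts that some part of the splitting must lie inside the non-IP set), but the underlying argument is identical to yours.
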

\begin{proof}
  The sufficiency is obvious. Conversely, assume there is an uncountable non-IP set $A$ in $S$, then we pick a countable IP set $B$ in $S$ arbitrarily so $A \cup B$ is also an IP set. But any uncountable splitting of $A \cup B$ must have one part, $C$ says, which is contained in $A$, so $C$ is not an IP set, contradiction.
\end{proof}

Since we mainly work in very weakly cancellative semigroups, and it is easy to provide an example of uncountable very weakly cancellative semigroups containing uncountable non-IP sets, $(\mathbb{R}^+, +)$ is such one with $(1, 2)$ as an uncountable non-IP subset. So one may further ask whether every uncountable very weakly cancellative semigroup has an uncountable non-IP subset? Here we give a counter-example.
\begin{theorem}\label{EX}
  There exists an uncountable very weakly cancellative semigroup with no idempotent, whose uncountable subsets are all IP sets.
\end{theorem}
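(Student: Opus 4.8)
The plan is to realize such a semigroup by ordinal arithmetic. I would take $S=\{\alpha:0<\alpha<\omega_1\}$, the set of nonzero countable ordinals, equipped with ordinal addition $+$. First I would record the routine facts: ordinal addition is associative and $S$ is closed under it, since the sum of two nonzero countable ordinals is again a nonzero countable ordinal; $S$ is uncountable of size $\aleph_1$; and there is no idempotent, because $\alpha+\alpha>\alpha$ for every $\alpha>0$.

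Next I would verify very weak cancellativity with $\kappa=\aleph_1$, so that ``less than $\kappa$'' means ``at most countably many''. Ordinal addition is left cancellative, hence every left solution set $\{x:a+x=b\}$ has at most one element, and a union of countably many singletons is countable. For the right side, $x+a=b$ forces $x\le x+a=b$, so each right solution set $\{x:x+a=b\}$ is contained in the countable interval $(0,b]$; a union of countably many such sets is again countable. Thus $S$ is very weakly cancellative.

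The crux is to show that \emph{every} uncountable $X\subseteq S$ is an IP set. For $\alpha\in S$ let $d(\alpha)$ be its leading Cantor normal form exponent, i.e. the unique $\delta$ with $\omega^\delta\le\alpha<\omega^{\delta+1}$. The key counting observation is that each level $[\omega^\delta,\omega^{\delta+1})$ is a countable interval, so if only countably many values of $d$ occurred on $X$, then $X$ would be a countable union of countable sets, contradicting $|X|=\aleph_1$. Hence $\{d(\alpha):\alpha\in X\}$ is uncountable, and I can choose an injective sequence $\alpha_1,\alpha_2,\dots\in X$ with $d(\alpha_1)<d(\alpha_2)<\cdots$. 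The point is the absorption property of ordinal addition: since $\omega^\delta$ is additively indecomposable, $\eta+\beta=\beta$ whenever $\eta<\omega^\delta$ and $\beta$ has leading exponent $\delta$. For $i<j$ we have $\alpha_i<\omega^{d(\alpha_i)+1}\le\omega^{d(\alpha_j)}$, so $\alpha_i+\alpha_j=\alpha_j$; iterating, any sum $\alpha_{n_1}+\cdots+\alpha_{n_k}$ with $n_1<\cdots<n_k$ telescopes to $\alpha_{n_k}$. Therefore $\mathrm{FS}(\langle\alpha_n\rangle_{n=1}^\infty)=\{\alpha_n:n\in\mathbb{N}\}\subseteq X$, which exhibits $X$ as an IP set.

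The main obstacle is precisely this last step: the cancellativity and idempotent-freeness are essentially immediate, whereas the IP property rests on combining the pigeonhole fact that uncountably many leading exponents must appear on any uncountable $X$ with the left-absorption behaviour of ordinal addition, so that all finite sums along an increasing-exponent sequence collapse back into $X$.
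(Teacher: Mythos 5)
Your proof is correct, and it reaches the theorem by a genuinely different construction than the paper's, though the underlying mechanism is the same. The paper builds an abstract commutative semigroup by hand: it takes $\omega_1$ pairwise disjoint countable blocks $A_\alpha=\{\alpha_n:n\in\mathbb{N}\}$ and decrees that an element of a higher-indexed block absorbs any element of a lower-indexed block (with $\alpha_n\oplus\alpha_m=\alpha_{n+m}$ inside a block); the bulk of its proof is then a case analysis verifying associativity, plus a direct computation of the solution sets. You instead take the nonzero countable ordinals under ordinal addition, where associativity is free, cancellativity on the left and the bound $x\le b$ on the right make very weak cancellativity immediate, and the absorption $\eta+\beta=\beta$ for $\eta<\omega^\delta\le\beta$ is supplied by additive indecomposability of $\omega^{\delta}$. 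In both arguments the decisive step is identical in spirit: the semigroup is stratified into $\omega_1$ countable levels (the blocks $A_\alpha$, respectively the intervals $[\omega^\delta,\omega^{\delta+1})$), so an uncountable set must meet uncountably many levels, and picking one point per level yields a sequence whose finite sums telescope to their last term, making $\mathrm{FS}$ of the sequence equal to the set of its terms. What your version buys is a natural, off-the-shelf example with no associativity check; what it gives up is commutativity, which the paper's example has (the theorem as stated does not require it, so this costs you nothing here, but it is worth noting). All the individual steps you use (left cancellativity of ordinal addition, countability of each level, the absorption identity) are standard and correctly applied.
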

\begin{proof}
  For each $\alpha < \omega_1$, take a set $A_\alpha$ of size $\omega$ such that $A_\alpha \cap A_\beta = \emptyset$ for each $\alpha < \beta < \omega_1$. We write $A_\alpha = \{ \alpha_n: n \in \mathbb{N} \}$ for each $\alpha < \omega_1$ and let $S = \bigcup_{\alpha < \omega_1}A_\alpha$. Then we define an operation $\oplus$ on $S$ by setting, for each $\alpha_n, \beta_m \in S$, $$\alpha_n \oplus \beta_m = \left\{ 
  \begin{array}{lcl}
  \beta_m, &  & {\alpha < \beta;}\\
  \alpha_n, &  & {\beta < \alpha;}\\
  \alpha_{n+m}, &  & {\alpha = \beta.}
  \end{array} \right.$$ 
  It is easy to see that $(S, \oplus)$ satisfies commutative law and has no idempotent. Now let us verify that it also satisfies associative law. For any $\alpha_{k_\alpha}, \beta_{k_\beta}, \gamma_{k_\gamma} \in S$, let $x = (\alpha_{k_\alpha} \oplus \beta_{k_\beta}) \oplus \gamma_{k_\gamma}$ and $y = \alpha_{k_\alpha} \oplus (\beta_{k_\beta} \oplus \gamma_{k_\gamma})$ for convenience. Then let us conduct classified discussion. If $\alpha, \beta$ and $\gamma$ are three distinct points of $\omega_1$, then both $x$ and $y$ are equal to $\delta_{k_\delta}$ where $\delta = \max\{\alpha, \beta, \gamma\}$; if $\alpha = \beta = \gamma$, then $x = y = \alpha_{k_\alpha + k_\beta + k_\gamma}$; if $\alpha = \beta < \gamma$, then $x = y = \gamma_{k_\gamma}$; if $\alpha = \beta > \gamma$, then $x = y = \alpha_{k_\alpha + k_\beta}$; By commutative law, the cases $\alpha < \beta = \gamma$ and $\alpha > \beta = \gamma$ are the same as the cases $\alpha = \beta > \gamma$ and $\alpha = \beta < \gamma$, respectively; if $\alpha = \gamma < \beta$, then $x = y = \beta_{k_\beta}$; otherwise, $\alpha = \gamma > \beta$, then $x = y = \alpha_{k_\alpha + k_\gamma}$. In conclusion, $x = y$ holds so $(S, \oplus)$ is an uncountable commutative semigroup. 
  
  Next we shall verify that $(S, \oplus)$ is very weakly cancellative. Take $\alpha_n, \beta_m \in S$ arbitrarily and let $B = \{x \in S: \alpha_n \oplus x = \beta_m \}$, so $B$ is a left solution set and, by commutative law, a right solution set. If $\alpha < \beta$, then $B = \{ \beta_m \}$; if $\alpha > \beta$, then $B = \emptyset$; if $\alpha = \beta$ and $n < m$, then $B = \{ \alpha_{m-n} \}$; if $\alpha = \beta$ and $n > m$, then $B = \emptyset$; otherwise, $\alpha = \beta$ and $n = m$, then $B = \{ \gamma_k \in S: \gamma < \alpha$ and $k \in \mathbb{N} \}$ so $|B| < \omega_1$. In conclusion, all left and right solution sets are countable so $(S, \oplus)$ is very weakly cancellative.
  
  Now let us show that every uncountable subset of $S$ is an IP set. Take an uncountable subset $A \subseteq S$. For each $\alpha < \omega_1$, let $B_\alpha = A \cap A_\alpha$ so $B_\alpha$ is countable. Note that $B_\alpha \cap B_\beta = \emptyset$ for any $\alpha < \beta < \omega_1$ and $A = \bigcup_{\alpha < \omega_1}B_\alpha$. Hence there exist uncountably many $B_\alpha$'s which are nonempty, then we pick one point from each of them to form an uncountable subset of $A$, which is an IP set, so $A$ is also an IP set. 
\end{proof}

At the end of this section, we give a result with respect to the almost disjoint problem when the IP set is known to be uncountable. 
\begin{theorem}\label{IPADU}
  Suppose $(S, +)$ is a semigroup of size $\kappa > \omega$, $\omega < \mu \leq \kappa$, $\mu$ contains $\lambda$ almost disjoint subsets and $A$ is an IP set in $S$ of size $\mu$. Then $A$ contains $\lambda$ almost disjoint IP subsets.
\end{theorem}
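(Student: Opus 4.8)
The plan is to exploit a feature that distinguishes this situation from the countable case handled in Theorem \ref{IPctblad}: here the witnessing finite-sum structure of an IP set is \emph{negligible} in size, being countable, whereas the pieces we must produce have the uncountable size $\mu$. Consequently a single finite-sum set can be shared by all $\lambda$ pieces without destroying almost disjointness, and no cancellativity hypothesis on $S$ is needed.

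Concretely, I would first invoke the observation recorded at the start of this section: since $A$ is an IP set and $S$ has no idempotent, fix an injective sequence $\langle x_n \rangle_{n=1}^\infty$ with $F := \mathrm{FS}(\langle x_n \rangle_{n=1}^\infty) \subseteq A$, so that $|F| = \omega$. Set $D = A \setminus F$. Because $|A| = \mu > \omega$ and $|F| = \omega$, we have $|D| = \mu$. Next, since $|D| = \mu$ and $\mu$ contains $\lambda$ almost disjoint subsets by hypothesis, I would transport such a family across a bijection between $D$ and $\mu$ to obtain $\{ E_\alpha : \alpha < \lambda \}$, a family of subsets of $D$ with $|E_\alpha| = \mu$ for every $\alpha$ and $|E_\alpha \cap E_\beta| < \mu$ for distinct $\alpha, \beta$.

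Finally I would define $C_\alpha = E_\alpha \cup F$ for each $\alpha < \lambda$. Each $C_\alpha$ lies in $A$ and contains $F = \mathrm{FS}(\langle x_n \rangle_{n=1}^\infty)$, hence is an IP set, and $|C_\alpha| = \max(\mu, \omega) = \mu$. For almost disjointness, note that the $E_\alpha$ are subsets of $D$ and $D \cap F = \emptyset$, so for distinct $\alpha, \beta$ we get $C_\alpha \cap C_\beta = (E_\alpha \cap E_\beta) \cup F$, whence $|C_\alpha \cap C_\beta| = \max(|E_\alpha \cap E_\beta|, \omega)$.

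The only point requiring care is the (routine) cardinal-arithmetic fact that the maximum of two cardinals each strictly below $\mu$ is again strictly below $\mu$; applied with $|E_\alpha \cap E_\beta| < \mu$ and $\omega < \mu$ this yields $|C_\alpha \cap C_\beta| < \mu$, so that $\{ C_\alpha : \alpha < \lambda \}$ is the desired family. There is essentially no serious obstacle here: the entire content of the argument is the structural remark that the countable IP core can be recycled across all pieces, and the contrast with Theorem \ref{IPctblad} — where the finiteness-of-finite-sums device was needed precisely because the IP core \emph{was} the whole set — is exactly what makes the uncountable case cheaper rather than harder.
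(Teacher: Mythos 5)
Your proposal is correct and follows essentially the same route as the paper's own proof: both recycle the single countable set $\mathrm{FS}(\langle x_n\rangle_{n=1}^\infty)\subseteq A$ as a common IP core, union it with the members of a $\lambda$-sized almost disjoint family inside $A$, and observe that adding a countable set cannot push the pairwise intersections up to size $\mu$. The only cosmetic difference is that you first delete the core from $A$ before extracting the almost disjoint family, which the paper does not bother to do.
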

\begin{proof}
  Since $A$ is an IP set, there is a sequence $\langle a_n \rangle_{n=1}^\infty$ such that $\mathrm{FS}(\langle a_n \rangle_{n=1}^\infty) \subseteq A$. Take a family $\langle A_\alpha \rangle_{\alpha < \lambda}$ of $\lambda$ almost disjoint subsets of $A$ and let $B_\alpha = A_\alpha \cup \mathrm{FS}(\langle a_n \rangle_{n=1}^\infty)$ for each $\alpha < \lambda$. Hence each $B_\alpha$ is an IP subset of $A$ and has size $\mu$, and for any $\alpha < \beta < \lambda$, $|B_\alpha \cap B_\beta| \leq |A_\alpha \cap A_\beta \cup \mathrm{FS}(\langle a_n \rangle_{n=1}^\infty)| < \mu$. So $\langle B_\alpha \rangle_{\alpha < \lambda}$ is as desired.
\end{proof}

\section{Combinatorially rich sets}
In this section, we focus on another notion - combinatorially rich sets in $(\mathbb{N}, +)$. If $f \in L \subseteq {^n\mathbb{N}}$ and $a \in \mathbb{Z}$, then we denote $f + a$ to be the function $f(x) + a$ and $L + a = \{ f + a: f \in L \}$. To show the main result, we need the following lemma.

\begin{lemma}\label{lem1}
  Let $\langle r_n \rangle_{n=1}^\infty$ is a sequence in $\mathbb{N}$. 
  \begin{enumerate}
    \item[(i)] $\bigcup_{n=1}^\infty \mathcal{P}_n({^{r_n}\mathbb{N}})$ contains an almost disjoint family $\langle B_\alpha \rangle_{\alpha < 2^\omega}$ of size $2^\omega$ such that for each $\alpha < 2^\omega$ and $L \in \bigcup_{n=1}^\infty \mathcal{P}_n({^{r_n}\mathbb{N}})$, there is some $L^\prime \in B_\alpha$ such that $L^\prime = L + a$ for some $a \in \mathbb{N}$.
    \item[(ii)] $\bigcup_{n=1}^\infty \mathcal{P}_n({^{r_n}\mathbb{N}})$ contains a family $\langle B_\alpha \rangle_{\alpha < \omega}$ of pairwise disjoint infinite subsets such that for each $\alpha < \omega$ and $L \in \bigcup_{n=1}^\infty \mathcal{P}_n({^{r_n}\mathbb{N}})$, there is some $L^\prime \in B_\alpha$ such that $L^\prime = L + a$ for some $a \in \mathbb{N}$.
  \end{enumerate}

\end{lemma}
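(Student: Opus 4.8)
The plan is to reduce both parts to a single combinatorial statement about $\mathbb{N}\times\mathbb{N}$ and then build the families explicitly. First I would introduce the shift action: on $\mathcal{U}:=\bigcup_{n=1}^{\infty}\mathcal{P}_n(^{r_n}\mathbb{N})$ define $L\sim L'$ iff $L'=L+b$ for some integer $b$ (a coordinatewise translate staying $\mathbb{N}$-valued). Since a translate preserves both the cardinality $n$ of $L$ and the common length $r_n$ of its members, each $\sim$-class lies inside a single $\mathcal{P}_n(^{r_n}\mathbb{N})$; moreover each class has a unique minimal representative $L_0$ (the one whose least coordinate value equals $\min\mathbb{N}$), and $a\mapsto L_0+a$ is a bijection from $\mathbb{N}\cup\{0\}$ onto that class. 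As $\mathcal{U}$ is countable and each class is countably infinite, there are countably many classes; fixing an enumeration identifies $\mathcal{U}$ with $\mathbb{N}\times\mathbb{N}$, the first coordinate naming the class (a ``column'') and the second the shift amount. The key translation I would record is that the covering requirement ``for every $L$ some $L+a\in B_\alpha$ with $a\in\mathbb{N}$'' is equivalent, for a set $B_\alpha$, to ``$B_\alpha$ meets every column in an infinite (equivalently unbounded) set'': indeed $L=L_0+c$ has its whole up-shift orbit $\{L_0+c':c'>c\}$ available, so hitting the orbit for every $c$ forces unboundedly large shifts of each $L_0$ to lie in $B_\alpha$.

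With this reduction, part (ii) is immediate: partition each column into $\omega$ infinite pieces $\{P^i_k:k<\omega\}$ and set $B_k=\bigcup_i(\{i\}\times P^i_k)$. Distinct $B_k$ are disjoint columnwise, hence disjoint, each is infinite in every column, and the covering property follows. For part (i) I would fix distinct reals $\langle x_\alpha\rangle_{\alpha<2^\omega}$ in $2^\omega$ and use a two-level construction per column. For each $i$, partition column $i$ into the $2^i$ infinite pieces $\{U^i_s:s\in 2^i\}$ indexed by length-$i$ binary strings; inside each piece $U^i_s\cong\mathbb{N}$ fix an almost disjoint family $\{W^{i,s}_y:y\in 2^\omega\}$ of size $2^\omega$ (which exists since $U^i_s$ is countably infinite). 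Then put $T^i_\alpha:=W^{i,\,x_\alpha\restriction i}_{x_\alpha}$ and $B_\alpha:=\bigcup_i(\{i\}\times T^i_\alpha)$. Each $T^i_\alpha$ is infinite, so $B_\alpha$ is infinite in every column and covers as required.

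To check almost disjointness for $\alpha\neq\beta$, I would split the columns according to whether $x_\alpha\restriction i=x_\beta\restriction i$. Letting $\Delta$ be the least level where $x_\alpha$ and $x_\beta$ differ, one has $x_\alpha\restriction i=x_\beta\restriction i$ exactly for $i\le\Delta$, i.e. for finitely many columns. For $i>\Delta$ the two branches select different pieces $U^i_{x_\alpha\restriction i}$ and $U^i_{x_\beta\restriction i}$ of the partition, so $T^i_\alpha\cap T^i_\beta=\emptyset$; for the finitely many $i\le\Delta$ the two sets lie in the same piece and are distinct members ($x_\alpha\neq x_\beta$) of an almost disjoint family, hence meet in a finite set. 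Thus $B_\alpha\cap B_\beta=\bigcup_{i\le\Delta}(\{i\}\times(T^i_\alpha\cap T^i_\beta))$ is a finite union of finite sets, so finite, and $\langle B_\alpha\rangle_{\alpha<2^\omega}$ is the desired almost disjoint family.

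The main obstacle, and the reason for the two-level design in (i), is reconciling two demands that pull in opposite directions: every $B_\alpha$ must be infinite in each of the infinitely many columns, yet every pairwise intersection must be finite overall. A single almost disjoint family reused in each column fails because finite columnwise overlaps accumulate over infinitely many columns; a pure partition of each column fails because forcing emptiness in every column would require the index functions $i\mapsto(\text{piece of }B_\alpha)$ to be pairwise everywhere different, which is impossible for $2^\omega$ many functions into a countable set. The resolution is precisely to let the tree partition force disjointness in all but finitely many columns while the per-piece almost disjoint families absorb the unavoidable finitely many columns of agreement into finite intersections. I expect the only points needing genuine care to be the finiteness of the agreement set $\{i:x_\alpha\restriction i=x_\beta\restriction i\}$ and the bookkeeping of the identification $\mathcal{U}\cong\mathbb{N}\times\mathbb{N}$ together with the equivalence between covering and being unbounded in each column.
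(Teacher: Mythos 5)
Your proof is correct, but it takes a genuinely different (and in fact more robust) route than the paper's. The paper also begins with the translation equivalence relation and its countably many classes $[L]_k$ with representatives $L_k^\star$, but then it fixes a single almost disjoint family $\langle A_\alpha\rangle_{\alpha<2^\omega}$ on $\mathbb{N}$ and sets $B_\alpha=\{L_k^\star+a_m^\alpha : k,m<\omega\}$, i.e.\ it reuses the same shift-set $A_\alpha$ in every column; almost disjointness is then asserted via $|B_\alpha\cap B_\beta|\le|A_\alpha\cap A_\beta|$. That inequality overlooks that each common shift $c\in A_\alpha\cap A_\beta$ contributes $L_k^\star+c$ to the intersection for \emph{every} $k<\omega$, so $B_\alpha\cap B_\beta$ is infinite whenever $A_\alpha\cap A_\beta\neq\emptyset$ --- which is exactly the ``finite columnwise overlaps accumulate over infinitely many columns'' obstruction you name in your last paragraph. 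Your two-level construction (partitioning column $i$ into $2^i$ pieces indexed by $x_\alpha\restriction i$, with a per-piece almost disjoint family absorbing the finitely many columns of agreement below the splitting level $\Delta$) genuinely resolves this: the intersection $B_\alpha\cap B_\beta$ is confined to the finitely many columns $i\le\Delta$ and is finite in each, while every $B_\alpha$ still meets every column in an unbounded set, which you correctly observe is equivalent to the covering requirement $L'=L+a$, $a\in\mathbb{N}$. Your reduction of part (ii) to a columnwise partition matches the paper's intent and is fine. In short, your argument proves the lemma as stated, whereas the paper's own verification of almost disjointness has a gap that your construction is specifically designed to avoid; the only bookkeeping worth spelling out in a final write-up is the bijection $a\mapsto L_0+a$ from $\mathbb{N}\cup\{0\}$ onto each class (injectivity because translation shifts the minimum coordinate value) and the fact that there are infinitely many classes (at least one for each $n$).
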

\begin{proof}
  The proofs of two items are essentially the same so here we show the first item. Let us define an equivalence relation $\sim$ on $\bigcup_{n=1}^\infty \mathcal{P}_n({^{r_n}\mathbb{N}})$: for each $L, L^\prime \in \bigcup_{n=1}^\infty \mathcal{P}_n({^{r_n}\mathbb{N}})$, $L \sim L^\prime$ if and only if there is some $a \in \mathbb{Z}$ such that $L = L^\prime + a$. Observe that there are $\omega$ equivalence classes, since any two distinct members $L, L^\prime$ are not equivalent if their sizes are not equal. Then we enumerate all equivalence classes as $\langle [L]_k \rangle_{k < \omega}$ and let $R = \{ L_k^\star: k < \omega$ and $L_k^\star \in [L]_k \}$ be the set of represent elements of equivalence classes. Let $\langle A_\alpha \rangle_{\alpha < 2^\omega}$ be an almost disjoint family of $\mathbb{N}$ and write $A_\alpha = \langle a_m^\alpha \rangle_{m < \omega}$ for each $\alpha < 2^\omega$. Then let $B_\alpha = \{L_k^\star + a_m^\alpha: k, m < \omega \}$ for each $\alpha < 2^\omega$. Let us verify that $\langle B_\alpha \rangle_{\alpha < 2^\omega}$ is as desired. 
  
  First, for each $\alpha < 2^\omega$, observe that for any distinct $k_1, k_2 < \omega$, $L_{k_1}^\star + a_{m_1}^\alpha \neq L_{k_2}^\star + a_{m_2}^\alpha$ for any $m_1, m_2 < \omega$; otherwise, $L_{k_1}^\star \sim L_{k_2}^\star$, which is a contradiction. Hence $|B_\alpha| = \omega$. Second, notice that for any $\alpha < \beta < \delta$, $B_\alpha \cap B_\beta = \{L \in \bigcup_{n=1}^\infty \mathcal{P}_n({^{r_n}S}): L = L_k^\star + a_{m_1}^\alpha = L_k^\star + a_{m_2}^\beta$ for some $k, m_1, m_2 < \omega \}$. This implies $|B_\alpha \cap B_\beta| \leq |A_\alpha \cap A_\beta| < \omega$. Third, for each $\alpha < 2^\omega$ and $L \in \bigcup_{n=1}^\infty \mathcal{P}_n({^{r_n}S})$, we pick some $k < \omega$ such that $L \sim L_k^\star$, so there is some $a \in \mathbb{Z}$ such that $L_k^\star = L + a$. We pick some $m < \omega$ such that $a_m^\alpha$ is larger than the absolute value of $a$ and let $L^\prime = L_k^\star + a_m^\alpha$. So $L^\prime \in B_\alpha$ and $L^\prime = L + b$ where $b = a + a_m^\alpha \in \mathbb{N}$.
\end{proof}

Then we have the following main result with respect to combinatorially rich sets.
\begin{theorem}\label{CR}
  Suppose $A$ is a combinatorially rich set in $(\mathbb{N}, +)$. Then
  \begin{enumerate}
    \item $A$ contains $2^\omega$ almost disjoint combinatorially rich subsets.
    \item $A$ can be split into $\omega$ pairwise disjoint combinatorially rich subsets.
  \end{enumerate}
\end{theorem}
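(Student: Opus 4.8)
The plan is to exploit a translation invariance of combinatorial richness together with the families produced by Lemma~\ref{lem1}. The basic observation is that if $L^\prime = L + a$ for some $a \in \mathbb{N}$, then for any $b \in \mathbb{N}$ and nonempty $H \subseteq \{ 1, \ldots, r_n \}$ one has
\[
S_{L^\prime}(b, H) = \Bigl\{ b + \sum_{t \in H}(f(t) + a) : f \in L \Bigr\} = S_L\bigl(b + a|H|, H\bigr),
\]
so a realization of $L^\prime$ inside a set is automatically a realization of $L$ (with a shifted base point) inside the same set. A second observation is that realizations can be pushed arbitrarily high: given any $L \in \mathcal{P}_n({^{r_n}\mathbb{N}})$ and any $N \in \mathbb{N}$, applying the definition of combinatorial richness to the challenge $L + (N+1)$, which again lies in $\mathcal{P}_n({^{r_n}\mathbb{N}})$, yields a pair $(a, H)$ with $S_{L+(N+1)}(a, H) \subseteq A$, and then $S_L(a + (N+1)|H|, H) \subseteq A$ has all of its finitely many elements exceeding $N$.

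First I would fix a witness sequence $\langle r_n \rangle_{n=1}^\infty$ for $A$ and enumerate $\bigcup_{n=1}^\infty \mathcal{P}_n({^{r_n}\mathbb{N}})$ as $\langle L_i \rangle_{i < \omega}$. Using the second observation, I would choose pairs $(a_i, H_i)$ for the $L_i$ inductively so that $S_{L_i}(a_i, H_i) \subseteq A$ and $\min S_{L_i}(a_i, H_i) > \max \bigcup_{j < i} S_{L_j}(a_j, H_j)$; this makes the finite sets $S_{L_i}(a_i, H_i)$ pairwise disjoint. Writing $(a_L, H_L)$ for the pair attached to $L$, and taking the almost disjoint family $\langle B_\alpha \rangle_{\alpha < 2^\omega}$ of Lemma~\ref{lem1}(i), I would set $A_\alpha = \bigcup_{L \in B_\alpha} S_L(a_L, H_L) \subseteq A$. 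Each $A_\alpha$ is combinatorially rich with the same witness $\langle r_n \rangle$: for an arbitrary challenge $L$, Lemma~\ref{lem1}(i) supplies $L^\prime = L + a \in B_\alpha$ with $a \in \mathbb{N}$, whence $S_L(a_{L^\prime} + a|H_{L^\prime}|, H_{L^\prime}) = S_{L^\prime}(a_{L^\prime}, H_{L^\prime}) \subseteq A_\alpha$ by the first observation. Since $B_\alpha$ is infinite and the realizations are disjoint and nonempty, $|A_\alpha| = \omega$. For almost disjointness, because the sets $S_L(a_L, H_L)$ are pairwise disjoint, any $x \in A_\alpha \cap A_\beta$ must come from a single $L$ lying in both $B_\alpha$ and $B_\beta$; hence $A_\alpha \cap A_\beta \subseteq \bigcup_{L \in B_\alpha \cap B_\beta} S_L(a_L, H_L)$, a finite union of finite sets, so $|A_\alpha \cap A_\beta| < \omega$. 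This proves item (1).

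For item (2) I would repeat the construction verbatim but with the pairwise disjoint family $\langle B_\alpha \rangle_{\alpha < \omega}$ of Lemma~\ref{lem1}(ii); the same argument shows each $A_\alpha$ is combinatorially rich, and disjointness of the $B_\alpha$ together with disjointness of the $S_L(a_L, H_L)$ forces the $A_\alpha$ to be pairwise disjoint. To upgrade this to a genuine splitting of $A$, I would absorb the leftover $A \setminus \bigcup_{\alpha < \omega} A_\alpha$ into the single cell $A_0$; since any set lying between a combinatorially rich set and $A$ is again combinatorially rich (a realization landing in the smaller set lands in the larger one), the enlarged $A_0$ stays combinatorially rich, and we obtain a partition of $A$ into $\omega$ combinatorially rich cells.

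The routine points are the bookkeeping of the inductive disjointification and the verification that the shifted base points remain in $\mathbb{N}$. The conceptual crux, and the step I expect to be most delicate, is making almost disjointness of the $A_\alpha$ follow cleanly from almost disjointness of the index families $B_\alpha$; this is exactly what the disjointification of the realizations $S_L(a_L, H_L)$ buys, and it is the reason the second observation on arbitrarily high realizations is needed rather than merely the defining property of a combinatorially rich set.
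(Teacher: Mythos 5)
Your proposal is correct and follows essentially the same route as the paper: enumerate the challenges $L_k$, inductively choose pairwise disjoint realizations $S_{L_k}(a_k,H_k)\subseteq A$ using the translation identity $S_{L+a}(b,H)=S_L(b+a|H|,H)$ (which is exactly how the paper shows the set of admissible pairs is infinite while the set of bad pairs is finite), and then distribute these realizations along the index families supplied by Lemma~\ref{lem1}. Your only genuine addition is explicitly absorbing the leftover $A\setminus\bigcup_{\alpha<\omega}A_\alpha$ into one cell so that item (2) yields an actual partition of $A$, a step the paper's proof leaves implicit.
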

\begin{proof}
    First we consider the first statement. Let $A$ be a combinatorially rich set in $\mathbb{N}$. By definition, for each $n \in \mathbb{N}$, we have $r_n \in \mathbb{N}$ such that for any $L \in \mathcal{P}_n({^{r_n}\mathbb{N}})$, there exist $a \in \mathbb{N}$ and nonempty $H \subseteq \{ 1, \ldots, r_n \}$ such that $S_L(a, H) \subseteq A$. Since the cardinality of $\bigcup_{n=1}^\infty \mathcal{P}_n({^{r_n}\mathbb{N}})$ is $\omega$, so we can enumerate it as $\langle L_k \rangle_{k < \omega}$. We will inductively build two $\omega$-sequences $\langle a_k \rangle_{k < \omega}$ and $\langle H_k \rangle_{k < \omega}$ such that for each $k < \omega$, $a_k \in \mathbb{N}$, $\emptyset \neq H_k \subseteq \{ 1, \ldots, r_{|L_k|} \}$ and $S_{L_k}(a_k, H_k) \subseteq A$, and for any $k < m < \omega$, $S_{L_k}(a_k, H_k) \cap S_{L_m}(a_m, H_m) = \emptyset$.
  
  For $L_0$, since $A$ is a combinatorially rich set, we obtain $a_0 \in S$ and nonempty $H_0 \subseteq \{ 1, \ldots, r_{|L_0|} \}$ such that $S_{L_0}(a_0, H_0) \subseteq A$. Let $0 < t < \omega$ and assume that $\langle a_k \rangle_{k < t}$ and $\langle H_k \rangle_{k < t}$ have been chosen. Let $m = |L_t|$ and $S_t = \bigcup_{k < t}S_{L_k}(a_k, H_k)$ so $|S_t| < \omega$.
  
  \begin{claim}
    Let $B = \{ (a, H) \in \mathbb{N} \times \mathcal{P}_f( \{ 1, 2, \ldots, r_m \}): S_{L_t}(a, H) \cap S_t \neq \emptyset \}$. Then $|B| < \omega$.
  \end{claim}
  \begin{proof}
    For each $H \in \mathcal{P}_f( \{ 1, 2, \ldots, r_m \})$, let $B_H = \{ a \in \mathbb{N}: S_{L_t}(a, H) \cap S_t \neq \emptyset \}$. Then $B = \bigcup \{ B_H \times \{ H \}: H \in \mathcal{P}_f( \{ 1, 2, \ldots, r_m \}) \}$. Since $\mathcal{P}_f( \{ 1, 2, \ldots, r_m \})$ is a finite set, it is enough to show that each $B_H$ is also finite. Observe that each $B_H = \bigcup_{x \in S_t}\bigcup_{f \in L_t} \{ a \in \mathbb{N}: x = a + \sum_{n \in H}f(n) \}$ and $|\{ a \in \mathbb{N}: x = a + \sum_{n \in H}f(n) \}| \leq 1$ when $x, f$ and $H$ are fixed, and $S_t$ and $L_t$ are also finite. So each $B_H$ is finite.
  \end{proof}
  \begin{claim}
    Let $C = \{ (a, H) \in \mathbb{N} \times \mathcal{P}_f( \{ 1, 2, \ldots, r_m \}): S_{L_t}(a, H) \subseteq A \}$. Then $|C| = \omega$.
  \end{claim}
  \begin{proof}
    Let $D = \{ a \in \mathbb{N}: \exists H \in \mathcal{P}_f( \{ 1, 2, \ldots, r_m \}) ( S_{L_t}(a, H) \subseteq A) \}$, so $|D| \leq |C|$. Hence it is enough to show that $|D| = \omega$. Since $A$ is a combinatorially rich set, $|D| \geq 1$. Assume $|D| \geq n$, $n \in \mathbb{N}$, take $a_1 < \ldots < a_n \in D$ and take $b \in \mathbb{N}$ larger than $a_n$. Since $L_t + b$ is also in $\mathcal{P}_m({^{r_m}\mathbb{N}})$, hence there exist $c \in \mathbb{N}$ and $H \in \mathcal{P}_f( \{ 1, 2, \ldots, r_m \})$ such that $S_{L_t + b}(c, H) \subseteq A$. Let $|H| = s$, then $S_{L_t + b}(c, H) = S_{L_t}(c + sb, H)$. So $c + sb \in D$ which is larger than $a_n$, it turns out that $|D| \geq n+1$. By induction, $|D| = \omega$.
  \end{proof}
  Then we take $(a_t, H_t) \in C \setminus B$. By induction, we obtain  $\langle a_k \rangle_{k < \omega}$ and $\langle H_k \rangle_{k < \omega}$ which is as desired. By Lemma \ref{lem1}(i), we pick an almost disjoint family $\langle B_\alpha \rangle_{\alpha < 2^\omega}$ of $\bigcup_{n=1}^\infty \mathcal{P}_n({^{r_n}\mathbb{N}})$, such that for each $\alpha < 2^\omega$ and $L \in \bigcup_{n=1}^\infty \mathcal{P}_n({^{r_n}\mathbb{N}})$, there is some $L^\prime \in B_\alpha$ such that $L^\prime = L + a$ for some $a \in \mathbb{N}$. Then for each $\alpha < 2^\omega$, define $A_\alpha = \bigcup\{ S_{L_k}(a_k, H_k): k < \omega$ and $L_k \in B_\alpha \}$. Let us verify that the family $\{A_\alpha: \alpha < 2^\omega \}$ is the witness of the first statement. Fix $\alpha < 2^\omega$. By construction we have $A_\alpha \subseteq A$; since $B_\alpha = \omega$ and for distinct $L_k, L_t \in B_\alpha$, $S_{L_k}(a_k, H_k) \cap S_{L_t}(a_t, H_t) = \emptyset$, so $|A_\alpha| = \omega$; for any $\beta < 2^\omega$ distinct from $\alpha$, $A_\alpha \cap A_\beta = \bigcup\{ S_{L_k}(a_k, H_k): k < \omega$ and $L_k \in B_\alpha \cap B_\beta \}$. Since $|B_\alpha \cap B_\beta| < \omega$, we have $|A_\alpha \cap A_\beta| < \omega$; for any $L \in \bigcup_{n=1}^\infty \mathcal{P}_n({^{r_n}\mathbb{N}})$, we pick some $k < \omega$ and $a \in \mathbb{N}$ such that $L_k \in B_\alpha$ and $L_k = L + a$. So $S_L(a_k + sa, H_k) = S_{L_k}(a_k, H_k) \subseteq A_\alpha$ where $s = |H_k|$, which means $A_\alpha$ is a combinatorially rich set.
  
  Therefore, $\{A_\alpha: \alpha < 2^\omega \}$ is an almost disjoint family of $A$, where each member is combinatorially rich. The proof of the second statement is essentially the same, using Lemma \ref{lem1}(ii) instead of Lemma \ref{lem1}(i).
\end{proof}
We do not know whether an analogous result of Theorem \ref{CR} holds in uncountable semigroups, so we close this section with this question.
\begin{question}
  If $(S, +)$ is an infinite semigroup of size $\kappa$ and $A$ is combinatorially rich in $S$, then does $A$ contain $\kappa$ pairwise disjoint combinatorially rich subsets? Moreover, if $\kappa$ contains $\lambda$ almost disjoint subsets, then does $A$ contain $\lambda$ almost disjoint combinatorially rich subsets? 
\end{question}

\section{An uncountable version of the polynomial extension of the central sets theorem}
In \cite[Section 2]{2023Polynomial}, authors introduced the notion of $J_p$-sets and $C_p$-sets and established a polynomial extension of the central sets theorem. However, we found that all these notions and relevant results in \cite{2023Polynomial} only focus on $(\mathbb{N}, +)$. Although authors noted that most of these results can be generalized to the case of countable commutative semigroups, the case of uncountable commutative semigroups is still unknown. In this section, we will establish an uncountable version of the polynomial extension of the central sets theorem.

Suppose $(S, +)$ is a commutative cancellative semigroup. We call $(S-S, +)$ is the difference group of $S$ where $S-S = \{ a - b: a, b \in S \}$ and $a - b$ is defined to be that element for which $(a - b) + b = a$. It is easy to see that it is an Abelian group. See \cite{2020A} for more information of difference groups. Moreover, if $(S-S, +, \cdot)$ is an integral domain and $j \in \mathbb{N}$, we say $f: (S-S)^j \rightarrow (S-S)$ is an integral polynomial on $(S-S)^j$ if it is a polynomial on $(S-S)^j$ with zero constant term and coefficients are in $S-S$. Let $\mathbb{P}_j$ be the set of integral polynomials on $(S-S)^j$ and let $\mathbb{P}$ denote $\mathbb{P}_1$. 

From now on we assume $(S, +)$ is a commutative cancellative semigroup without 0 and $(S-S, +, \cdot)$ is an integral domain. Then we observe that the following result holds, a version of \cite[Theorem 4.4]{2024Algebra}.

\begin{theorem}\label{ps}
  Suppose $j \in \mathbb{N}$, $u$ is an idempotent in $\beta(S^j)$, $R \in \mathcal{P}_f(\mathbb{P}_j)$, $A$ is a piecewise syndetic subset of $S-S$ and $L$ is a minimal left ideal of $\beta(S-S)$ such that $\overline{A} \cap L \neq \emptyset$. Then $\{\vec{x} \in S^j: \overline{A} \cap L \cap \bigcap_{f \in R} \overline{-f(\vec{x}) + A} \neq \emptyset \} \in u$.
\end{theorem}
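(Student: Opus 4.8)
The plan is to treat $u$ as an arbitrary idempotent (so that, in effect, we are showing the displayed set is a member of every idempotent of $\beta(S^j)$) and to collapse the two nested quantifiers — an existential over $p\in\beta(S-S)$ sitting inside a largeness assertion over $u$ — into a single clean membership problem. First I would record the translation dictionary in $\beta(S-S)$: for $t\in S-S$ and $p\in\beta(S-S)$ one has $-t+A\in p$ iff $A\in t+p$, so $\overline{-f(\vec{x})+A}=\{p:f(\vec{x})+p\in\overline{A}\}$. Hence $\vec{x}$ lies in the target set precisely when some $p\in L\cap\overline{A}$ satisfies $f(\vec{x})+p\in\overline{A}$ for every $f\in R$.

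Second, I would fix a single candidate $p\in L\cap\overline{A}$ (which exists by hypothesis) and study $G_R(p)=\{\vec{x}:\forall f\in R,\ f(\vec{x})+p\in\overline{A}\}$; since the target set is $\bigcup_{p\in L\cap\overline{A}}G_R(p)$, it suffices to produce one $p$ with $G_R(p)\in u$. For a single $f$, the map $\vec{x}\mapsto f(\vec{x})+p$ is $\rho_p$ composed with the inclusion of $f$, so its continuous extension to $\beta(S^j)$ is $\rho_p\circ\tilde{f}$, where $\tilde{f}\colon\beta(S^j)\to\beta(S-S)$ is the continuous extension of $\vec{x}\mapsto f(\vec{x})$ and $\tilde{f}(u)=u\text{-}\lim_{\vec{x}}f(\vec{x})$. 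Since $\overline{A}$ is clopen, $\{\vec{x}:f(\vec{x})+p\in\overline{A}\}\in u$ iff $\tilde{f}(u)+p\in\overline{A}$. Intersecting over the finite set $R$, the whole theorem reduces to finding $p\in L\cap\overline{A}$ with $\tilde{f}(u)+p\in\overline{A}$ for every $f\in R$. A pleasant feature of this form is that the ideal membership is automatic: as $L$ is a left ideal and $\tilde{f}(u)\in\beta(S-S)$, we get $\tilde{f}(u)+p\in\beta(S-S)+L\subseteq L$, so only ``$\in\overline{A}$'' has to be arranged.

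I would then establish this reduced goal by induction on a PET-type weight of $R$ (polynomials ordered by degree and leading data). The base case $R=\emptyset$ is just $\overline{A}\cap L\neq\emptyset$. For the inductive step I would exploit $u=u+u$ to write $\tilde{f}(u)=u\text{-}\lim_{\vec{y}}u\text{-}\lim_{\vec{z}}f(\vec{y}+\vec{z})$ and use the expansion $f(\vec{y}+\vec{z})=f(\vec{z})+\bigl(f(\vec{y}+\vec{z})-f(\vec{z})\bigr)$, in which the second summand has strictly smaller degree in $\vec{z}$ while its $\vec{z}$-constant term $f(\vec{y})$ is absorbed by replacing the witness $p$ with $f(\vec{y})+p$, again an element of $L$. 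This drives the weight strictly downward and lets the inductive hypothesis take over, the final $p$ being extracted from the resulting nested family of nonempty closed sets by compactness of $\beta(S-S)$.

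The step I expect to be the main obstacle is exactly this inductive realization: producing one $p$ that meets all the translate-conditions \emph{simultaneously} rather than one at a time. Two points need care. First, the degree bookkeeping in $f(\vec{y}+\vec{z})-f(\vec{z})$ depends on the leading terms not cancelling, which is where the hypothesis that $(S-S,+,\cdot)$ is an integral domain is essential, guaranteeing that $\tilde{f}$ is genuinely controlled by the degree of $f$ with no spurious collapse. Second, although the left-ideal condition is free, after each shift $p\mapsto f(\vec{y})+p$ I must re-certify that the translated problem still has a witness in $L\cap\overline{A}$; for this I would use that piecewise syndeticity is preserved under the shifts $t\mapsto -t+A$ for $t\in S-S$ together with $f(\vec{y})+L\subseteq L$. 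Assembling these ingredients yields the required $p$ and hence $G_R(p)\in u$, which gives the theorem.
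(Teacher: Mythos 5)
The paper does not actually prove this theorem: it defers entirely to the proof of Theorem~4.4 of Hindman and Strauss \cite{2024Algebra}, which is a PET induction in which the witness ultrafilter in $\overline{A}\cap L$ is allowed to depend on $\vec{x}$. Your proposal replaces the statement by a strictly stronger one --- a \emph{single} $p\in L\cap\overline{A}$ with $G_R(p)\in u$, i.e.\ with $\tilde{f}(u)+p\in\overline{A}$ for all $f\in R$ --- and this is where the argument breaks. The compactness extraction you invoke at the end requires the sets $\{p:\tilde{f}(u)+p\in\overline{A}\}$ to be closed; but $p\mapsto \tilde{f}(u)+p$ is left translation by a (generally nonprincipal) ultrafilter, which is \emph{not} continuous in the right topological semigroup $\beta(S-S)$, and indeed $\{p:\,q+p\in\overline{A}\}=\bigcup_{B\in q}\bigcap_{t\in B}\overline{-t+A}$ is only a union of closed sets. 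This is precisely why the theorem is phrased with the existential over $p$ inside the displayed set: the witness must vary with $\vec{x}$, and no uniform $p$ is produced by (or needed for) the cited proof. Your opening reductions --- the dictionary $\overline{-f(\vec{x})+A}=\{p:f(\vec{x})+p\in\overline{A}\}$ and the clopen/$u$-limit equivalence via $\rho_p\circ\tilde{f}$ --- are correct, but they funnel the problem into a strengthened claim the rest of the sketch cannot deliver.

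The inductive step is also not actually set up. Writing $f(\vec{y}+\vec{z})=f(\vec{z})+\bigl(f(\vec{y}+\vec{z})-f(\vec{z})\bigr)$ and absorbing the $\vec{z}$-constant term $f(\vec{y})$ into the witness leaves you, as a polynomial in $\vec{z}$, with $f(\vec{y}+\vec{z})-f(\vec{y})$, which has the same degree and the same leading form as $f$: nothing in the PET weight has decreased. The genuine induction must pass from $R$ to the derived family of all differences $f(\,\cdot\,+\vec{y})-g(\,\cdot\,)$ with $f,g\in R$, check that this family has strictly smaller weight because leading terms cancel within equivalence classes of polynomials, and combine the inductive hypothesis with a van der Waerden--style colour-focusing iteration over finitely many translates (with the linear, IP van der Waerden case as the effective base, not $R=\emptyset$). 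None of that machinery is present in the sketch, so both the reduction and its proposed proof have genuine gaps.
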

\begin{proof}
  The proof is essentially the same as that of \cite[Theorem 4.4]{2024Algebra}.
\end{proof}

Then we have the following version of Abstract IP-Polynomial van der Waerden theorem for commutative cancellative semigroups.
\begin{corollary}\label{abs}
  Suppose $j \in \mathbb{N}$, $R \in \mathcal{P}_f(\mathbb{P}_j)$, $A$ is a piecewise syndetic subset of $S$ and $\langle \vec{y_n} \rangle_{n=1}^\infty$ is a sequence in $S^j$. Then there exist $a \in S$ and $H \in \mathcal{P}_f(\mathbb{N})$ such that for every $f \in R$, $a + f(\sum_{n \in H}\vec{y_n}) \in A$.
\end{corollary}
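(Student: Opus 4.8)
The plan is to feed the idempotent coming from Hindman's theorem applied to $\langle \vec{y_n}\rangle_{n=1}^\infty$ into Theorem \ref{ps}, and then unwind the resulting ultrafilter membership into a concrete $a$ and $H$. First I would produce the relevant idempotent in $\beta(S^j)$: since $S$ is a semigroup, $S^j$ is a semigroup under coordinatewise addition, and the set $T = \bigcap_{m=1}^\infty \overline{\mathrm{FS}(\langle \vec{y_n}\rangle_{n=m}^\infty)}$ is a nonempty compact subsemigroup of $\beta(S^j)$; hence it contains an idempotent $u$, and for this $u$ one has $\mathrm{FS}(\langle \vec{y_n}\rangle_{n=1}^\infty) \in u$.

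Next I would secure the minimal left ideal needed to invoke Theorem \ref{ps}. Since $A$ is piecewise syndetic in $S$, I would argue that, viewed inside $S-S$, the closure $\overline{A}$ in $\beta(S-S)$ meets $K(\beta(S-S))$; choosing a minimal left ideal $L$ of $\beta(S-S)$ with $\overline A \cap L \neq\emptyset$ then places us exactly in the hypotheses of Theorem \ref{ps}. With $u$, $R$, $A$ and $L$ in hand, Theorem \ref{ps} yields that the set $E = \{\vec x \in S^j : \overline A \cap L \cap \bigcap_{f\in R}\overline{-f(\vec x)+A}\neq\emptyset\}$ belongs to $u$.

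Now both $E$ and $\mathrm{FS}(\langle \vec{y_n}\rangle_{n=1}^\infty)$ lie in the ultrafilter $u$, so their intersection is in $u$ and in particular nonempty. I would pick $\vec x$ in this intersection and write $\vec x = \sum_{n\in H}\vec{y_n}$ for some nonempty $H \in \mathcal P_f(\mathbb N)$, noting that $\vec x \in S^j$. Because $\vec x \in E$, there is an ultrafilter $p \in \overline A \cap L \cap \bigcap_{f\in R}\overline{-f(\vec x)+A}$, which means $A \in p$ and $-f(\vec x)+A \in p$ for every $f \in R$, where $-t+A = \{y \in S-S : t+y \in A\}$. As $R$ is finite, $A \cap \bigcap_{f\in R}(-f(\vec x)+A) \in p$ and is therefore nonempty; choosing $a$ in it gives $a \in A \subseteq S$ together with $f(\vec x)+a \in A$ for each $f$, and commutativity rewrites this as $a + f(\sum_{n\in H}\vec{y_n}) \in A$ for every $f\in R$, as required.

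The step I expect to be the main obstacle is the passage from ``$A$ is piecewise syndetic in $S$'' to ``$\overline A$ meets $K(\beta(S-S))$'', i.e.\ that $A$ remains piecewise syndetic in the difference group $S-S$; thickness and syndeticity are measured against finite subsets of $S-S$ rather than of $S$, so this requires a supporting lemma relating $K(\beta S)$ and $K(\beta(S-S))$ (or an explicit thickness argument) rather than being automatic. Everything after that is routine ultrafilter bookkeeping: the only other points worth checking are that $S^j$ is genuinely a semigroup, so that $\mathrm{FS}(\langle \vec{y_n}\rangle_{n=1}^\infty)$ consists of elements of $S^j$, and that the witness $a$ lands in $A\subseteq S$, which is automatic since $a \in A$.
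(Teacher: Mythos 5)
Your proposal follows the paper's own proof essentially step for step: the idempotent in $\bigcap_{m=1}^\infty \overline{\mathrm{FS}(\langle \vec{y_n}\rangle_{n=m}^\infty)}$, the transfer of piecewise syndeticity from $S$ to $S-S$ (which the paper handles by citing an external result, exactly the supporting lemma you flag), Theorem~\ref{ps}, and then extraction of $H$ and $a$ from the ultrafilter memberships. The only cosmetic difference is that you pick a point $p$ of the nonempty open set $\overline A \cap L \cap \bigcap_{f\in R}\overline{-f(\vec x)+A}$ and use the finite intersection property of $p$, while the paper uses density of $S-S$ in $\beta(S-S)$ to choose $a$ directly; these are interchangeable.
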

\begin{proof}
  By \cite[Lemma 5.11]{2012Algebra}, we pick an idempotent $u \in \bigcap_{m=1}^\infty \overline{\mathrm{FS}(\langle \vec{y_n} \rangle_{n=m}^\infty)}$. By \cite[Theorem 5]{2020A}, $A$ is also a piecewise syndetic subset of $S-S$, so there is a minimal left ideal $L$ of $\beta(S-S)$ such that $\overline{A} \cap L \neq \emptyset$. Then by Theorem \ref{ps}, $\{\vec{x} \in S^j: \overline{A} \cap L \cap \bigcap_{f \in R} \overline{-f(\vec{x}) + A} \neq \emptyset \} \in u$. In particular, $B \in u$ where $B = \{\vec{x} \in S^j: \overline{A} \cap \bigcap_{f \in R} \overline{-f(\vec{x}) + A} \neq \emptyset \}$. Since $\mathrm{FS}(\langle \vec{y_n} \rangle_{n=1}^\infty) \in u$, we have $B \cap \mathrm{FS}(\langle \vec{y_n} \rangle_{n=1}^\infty) \neq \emptyset$. Then pick some $H \in \mathcal{P}_f(\mathbb{N})$ such that $\sum_{n \in H}\vec{y_n} \in B$. Hence $\overline{A} \cap \bigcap_{f \in R} \overline{-f(\sum_{n \in H}\vec{y_n}) + A}$ is a nonempty open subset of $\beta (S-S)$. It is known that $S-S$ is dense in $\beta (S-S)$, so we can pick $a \in (S-S) \cap \overline{A} \cap \bigcap_{f \in R} \overline{-f(\sum_{n \in H}\vec{y_n}) + A}$. Observe that $a \in A \subseteq S$, so $a$ and $H$ are as desired.
\end{proof}

By the above corollary, we have the following result, where $S_{R, L}(a, H) = \{ a + f(\sum_{t \in H}g(t)): f \in R$ and $g \in L \}$.
\begin{theorem}\label{psj}
  Suppose, $m \in \mathbb{N}$, $A$ is a piecewise syndetic subset of $S$, $R \in \mathcal{P}_f(\mathbb{P})$ and $L \in \mathcal{P}_f({^{\mathbb{N}}S})$. Then there exist $a \in S$ and $H \in \mathcal{P}_f(\mathbb{N})$ such that $\min H > m$ and $S_{R, L}(a, H) \subseteq A$.
\end{theorem}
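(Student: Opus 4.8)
The plan is to deduce Theorem~\ref{psj} from Corollary~\ref{abs} by bundling the finitely many sequences comprising $L$ into a single sequence in a Cartesian power of $S$, and by replacing each one-variable polynomial in $R$ by its several coordinate-restrictions so that Corollary~\ref{abs} can be applied in the right number of variables. The requirement $\min H > m$ will be obtained by applying Corollary~\ref{abs} not to the given sequences but to their shifts.

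First I would enumerate $L = \{ g_1, \ldots, g_j \}$, where $j = |L| \in \mathbb{N}$, and define a sequence $\langle \vec{y}_n \rangle_{n=1}^\infty$ in $S^j$ by $\vec{y}_n = (g_1(n), \ldots, g_j(n))$. For each $f \in R$ and each $i \in \{ 1, \ldots, j \}$, define $F_{f,i}: (S-S)^j \rightarrow (S-S)$ by $F_{f,i}(x_1, \ldots, x_j) = f(x_i)$. Since $f$ is an integral polynomial in one variable (zero constant term, coefficients in $S-S$), each $F_{f,i}$ is an integral polynomial on $(S-S)^j$, so $R' = \{ F_{f,i}: f \in R,\ 1 \leq i \leq j \}$ is a finite subset of $\mathbb{P}_j$. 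The point of this encoding is that for any $H \in \mathcal{P}_f(\mathbb{N})$ one has $(\sum_{t \in H} \vec{y}_t)_i = \sum_{t \in H} g_i(t)$, hence $F_{f,i}(\sum_{t \in H} \vec{y}_t) = f(\sum_{t \in H} g_i(t))$, which is precisely the kind of term appearing in $S_{R,L}(a,H)$.

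To secure $\min H > m$, I would apply Corollary~\ref{abs} not to $\langle \vec{y}_n \rangle_{n=1}^\infty$ directly but to the shifted sequence $\langle \vec{y}_{n+m} \rangle_{n=1}^\infty$, together with $j$, the polynomial family $R'$, and the piecewise syndetic set $A$. This produces $a \in S$ and a nonempty $H_0 \in \mathcal{P}_f(\mathbb{N})$ such that $a + F(\sum_{n \in H_0} \vec{y}_{n+m}) \in A$ for every $F \in R'$. Setting $H = \{ n + m: n \in H_0 \}$ gives $\min H = \min H_0 + m > m$ and $\sum_{n \in H_0} \vec{y}_{n+m} = \sum_{t \in H} \vec{y}_t$. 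Unwinding the definitions of $R'$ and $F_{f,i}$ then yields $a + f(\sum_{t \in H} g(t)) \in A$ for every $f \in R$ and every $g \in L$, that is, $S_{R,L}(a,H) \subseteq A$, as desired.

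The construction is routine; the only genuine content is the encoding of the second paragraph. The one point demanding care is that the one-variable polynomials of $R$ must be applied \emph{separately} to each of the $j$ sequences of $L$, so a single $f$ has to be replaced by its $j$ coordinate-restrictions $F_{f,1}, \ldots, F_{f,j}$ rather than by a single $j$-variable polynomial; checking that each $F_{f,i}$ indeed lies in $\mathbb{P}_j$ is then immediate. The shift to $\langle \vec{y}_{n+m} \rangle_{n=1}^\infty$ is the cleanest device for extracting $\min H > m$ from a form of Corollary~\ref{abs} whose statement does not itself constrain $\min H$.
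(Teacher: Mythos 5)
Your proposal is correct and is essentially the argument the paper intends: the paper's proof simply defers to the proof of Theorem~2 of the cited polynomial-extension paper, which carries out exactly this reduction to Corollary~\ref{abs} --- bundling the members of $L$ into a single sequence in $S^j$, replacing each $f \in R$ by its coordinate-restrictions in $\mathbb{P}_j$, and shifting (equivalently, passing to a tail of) the sequence to force $\min H > m$. Your write-up just makes explicit the details that the paper leaves to the reference.
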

\begin{proof}
  The argument is the same as that of \cite[Theorem 2]{2023Polynomial} by applying Corollary \ref{abs}.
\end{proof}

Then we can introduce the more general definitions of $J_p$-sets and $C_p$-sets and then establish the uncountable version of the polynomial extension of the central set theorem as the way of \cite{2023Polynomial}.
\begin{definition}\label{defJC}
 Let $A \subseteq S$. 
\begin{enumerate}
  \item $A$ a $J_p$-set in $S$ if for every $R \in \mathcal{P}_f(\mathbb{P})$ and $L \in \mathcal{P}_f({^{\mathbb{N}}S})$, there exist $a \in S$ and $H \in \mathcal{P}_f(\mathbb{N})$ such that $S_{R, L}(a, H) \subseteq A$.
  \item $\mathcal{J}_p = \{ p \in \beta S: \forall X \in p (X$ is a $J_p$-set in $S) \}$.
  \item $A$ is a $C_p$-set in $S$ if there is an idempotent $p \in \mathcal{J}_p$ such that $A \in p$.
\end{enumerate}
\end{definition}

It is easy to see that every $J_p$-set in $S$ is a $J$-set. And notice that by Theorem \ref{psj}, every piecewise syndetic set in $S$ is a $J_p$-set, hence $K(\beta S) \subseteq \mathcal{J}_p$. It turns out that every central set in $S$ is a $C_p$-set. Also observe that all $C_p$-sets are $C$-sets. Now the following polynomial extension of the central set theorem for commutative cancellative semigroups is established, which also holds for central sets.
\begin{theorem}\label{UPV}
  Suppose $A$ is a $C_p$-set in $S$ and $R \in \mathcal{P}_f(\mathbb{P})$. Then there exist functions $\alpha: \mathcal{P}_f({^{\mathbb{N}}S})\rightarrow S$ and $H: \mathcal{P}_f({^{\mathbb{N}}S}) \rightarrow \mathcal{P}_f(\mathbb{N})$ such that 
  \begin{enumerate}
    \item if $L_1, L_2 \in \mathcal{P}_f({^{\mathbb{N}}S})$ and $L_1 \subsetneq L_2$, then $\max H(L_1) < \min H(L_2)$, and
    \item if $m \in \mathbb{N}$, $L_1, \ldots, L_m \in \mathcal{P}_f({^{\mathbb{N}}S})$, $L_1 \subsetneq \ldots \subsetneq L_m$ and $g_i \in L_i$ for each $i \in \{ 1, \ldots, m \}$, then for every $f \in R$, $\sum_{i=1}^{m}\alpha(L_i) + f\left(\sum_{i=1}^{m}\sum_{t \in H(L_i)}g_i(t)\right) \in A$.
  \end{enumerate}
\end{theorem}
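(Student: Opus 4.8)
The plan is to run the standard Central Sets Theorem induction, but driven by the $J_p$-set property of members of an idempotent in $\mathcal{J}_p$ rather than by the ordinary $J$-set property, with the polynomial bookkeeping handled by the integral-domain structure of $S-S$. Since $A$ is a $C_p$-set, I would fix an idempotent $p \in \mathcal{J}_p$ with $A \in p$ and set $A^\star = \{s \in A : -s + A \in p\}$. By the standard properties of star-sets for idempotents (\cite[Lemma 4.14]{2012Algebra}) one has $A^\star \in p$ and, for every $s \in A^\star$, $-s + A^\star \in p$. The key point is that, by the definition of $\mathcal{J}_p$, every member of $p$ is a $J_p$-set; in particular $A^\star$ and all the finite intersections formed below will be $J_p$-sets. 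I would actually prove the stronger statement in which conclusion (2) lands in $A^\star$ rather than merely in $A$, which makes the induction self-feeding; since $A^\star \subseteq A$ this suffices. I would also record the routine refinement that a $J_p$-set admits witnesses with $\min H$ arbitrarily large, obtained by replacing each $g \in L$ by the tail sequence $t \mapsto g(t+m)$ exactly as in the proof of Theorem \ref{psj}; this is what will force condition (1).

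Next I would construct $\alpha$ and $H$ by induction on $|L|$. For $|L|=1$, applying the $J_p$-set property of $A^\star$ to the pair $(R,L)$ directly yields $a \in S$ and $H \in \mathcal{P}_f(\mathbb{N})$ with $S_{R,L}(a,H) \subseteq A^\star$; set $\alpha(L)=a$, $H(L)=H$. For the inductive step, suppose the functions are defined on all sets of size at most $n$ and let $|L|=n+1$. There are only finitely many proper chains $F_1 \subsetneq \cdots \subsetneq F_k \subsetneq L$ (since $L$ is finite), and for each such chain only finitely many selections $g_i \in F_i$ and finitely many $f \in R$. For each such datum, write $v = \sum_{i=1}^k \sum_{t \in H(F_i)} g_i(t)$ and $c = \sum_{i=1}^k \alpha(F_i) + f(v)$; by the inductive hypothesis in its $A^\star$-strengthened form, $c \in A^\star$, so $-c + A^\star \in p$.

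The heart of the argument is the polynomial shift. Because $S-S$ is an integral domain and $f \in \mathbb{P}$ has zero constant term with coefficients in $S-S$, for each fixed $v$ the function $w \mapsto f(v+w) - f(v)$ is again an integral polynomial $\tilde f_v \in \mathbb{P}$: its constant term is $f(v)-f(v)=0$ and its coefficients lie in the ring $S-S$. Thus $f(v+w) = f(v) + \tilde f_v(w)$. I would collect the finite set $\tilde R$ of all these $\tilde f_v$ together with the members of $R$ themselves (the latter to handle the one-element chain $(L)$), and form the finite intersection $B = A^\star \cap \bigcap (-c + A^\star)$ over all the chain data above. Since each intersectand lies in $p$ and the intersection is finite, $B \in p$, so $B$ is a $J_p$-set. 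Applying the $J_p$-set property of $B$ to $(\tilde R, L)$, with $\min H$ chosen larger than $\max H(F)$ for every $F \subsetneq L$, produces $a$ and $H$ with $S_{\tilde R, L}(a,H) \subseteq B$; set $\alpha(L)=a$, $H(L)=H$, and note that condition (1) is immediate from this choice of $\min H$.

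Finally I would verify condition (2). Given a chain $L_1 \subsetneq \cdots \subsetneq L_m$, selections $g_i \in L_i$, and $f \in R$, set $v = \sum_{i=1}^{m-1}\sum_{t \in H(L_i)} g_i(t)$, $w = \sum_{t \in H(L_m)} g_m(t)$, and $c = \sum_{i=1}^{m-1}\alpha(L_i) + f(v)$. The step taken for $L_m$ gives $\alpha(L_m) + \tilde f_v(w) \in B \subseteq -c + A^\star$, that is $c + \alpha(L_m) + \tilde f_v(w) \in A^\star$; rewriting $c + \alpha(L_m) + \tilde f_v(w) = \sum_{i=1}^m \alpha(L_i) + f(v) + \tilde f_v(w) = \sum_{i=1}^m \alpha(L_i) + f(v+w)$ and noting $v + w = \sum_{i=1}^m \sum_{t \in H(L_i)} g_i(t)$ yields exactly the desired membership in $A^\star \subseteq A$ (the base case $m=1$ instead uses $R \subseteq \tilde R$ and $B \subseteq A^\star$ directly). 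I expect the main obstacle to be organizational rather than conceptual: keeping the bookkeeping of the finitely many chains, selections, and shifted polynomials consistent so that each $\tilde f_v$ is matched with its own translate $-c + A^\star$. This matching is precisely what forces $B$ to be the intersection over all data simultaneously, and once $B \in p$ is secured, the integral-domain shift and the $J_p$-set property do the rest.
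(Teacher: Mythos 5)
Your proof is correct and takes essentially the same route as the paper: the paper's proof of Theorem~\ref{UPV} simply defers to the proof of the polynomial stronger central sets theorem in the countable case (\cite[Theorem 11]{2023Polynomial}), which is precisely the $A^\star$-induction over $\mathcal{P}_f({^{\mathbb{N}}S})$, driven by the $J_p$-property of members of the idempotent $p \in \mathcal{J}_p$ and the observation that $w \mapsto f(v+w)-f(v)$ is again an integral polynomial, that you reconstruct.
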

\begin{proof}
  The proof is essentially the same as that of \cite[Theorem 11]{2023Polynomial}.
\end{proof}

\section{$C_p$-sets and PP-rich sets}

In this section, we continue to investigate the partition and almost disjoint properties of combinatorial notions. First of all, we have the following lemma.
\begin{lemma}\label{ideal}
  $\mathcal{J}_p$ is a compact ideal of $(\beta S, +)$.
\end{lemma}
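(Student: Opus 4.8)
The plan is to verify the three defining properties of a compact ideal separately: that $\mathcal{J}_p$ is closed in $\beta S$, that it is nonempty, and that it is a two-sided ideal of $(\beta S, +)$. Nonemptiness is immediate from the remarks preceding Theorem~\ref{UPV}: since every piecewise syndetic set is a $J_p$-set by Theorem~\ref{psj}, we have $K(\beta S) \subseteq \mathcal{J}_p$, so $\mathcal{J}_p \neq \emptyset$. Compactness reduces to closedness since $\beta S$ is compact, and closedness I would establish by showing the complement is open: if $p \notin \mathcal{J}_p$, then some $X \in p$ fails to be a $J_p$-set, and then every $q \in \overline{X} = U_X$ also contains $X$ and hence lies outside $\mathcal{J}_p$, so $U_X$ is an open neighborhood of $p$ disjoint from $\mathcal{J}_p$.

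The substantive part is showing $\mathcal{J}_p$ is an ideal. First I would record the key combinatorial fact that the family of $J_p$-sets is \emph{partition regular}: if $A_1 \cup A_2$ is a $J_p$-set, then one of $A_1, A_2$ is a $J_p$-set. This should follow from the definition of $J_p$-sets together with a standard pigeonhole/compactness argument applied to the finitely many functions and finite sums involved, exactly parallel to the proof that $J$-sets are partition regular in \cite{2012Algebra}. Partition regularity is what makes $\mathcal{J}_p$ a well-behaved object: it guarantees that $p \in \mathcal{J}_p$ iff every member of $p$ is a $J_p$-set, and it is the analogue of the fact that $J(S)$ is an ideal.

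To prove $\mathcal{J}_p$ is a left ideal, I would take $p \in \mathcal{J}_p$ and $q \in \beta S$ and show $q + p \in \mathcal{J}_p$; that is, every $A \in q + p$ is a $J_p$-set. By the definition of the extended operation, $A \in q + p$ means $\{x \in S : -x + A \in p\} \in q$, so this set is nonempty and we may fix $x$ with $-x + A \in p$. Since $p \in \mathcal{J}_p$, the set $-x + A$ is a $J_p$-set, and I would then show that a left translate $x + B$ of a $J_p$-set $B$ is again a $J_p$-set: given $R \in \mathcal{P}_f(\mathbb{P})$ and $L \in \mathcal{P}_f({^{\mathbb{N}}S})$, apply the $J_p$-property of $B$ to obtain $a, H$ with $S_{R,L}(a,H) \subseteq B$, and use commutativity to absorb $x$ into the witness $a$, giving $S_{R,L}(x+a, H) \subseteq x + B$. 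Applying this with $B = -x + A$ yields $A \supseteq x + (-x+A)$ as a $J_p$-set. For the right ideal property, take $p \in \mathcal{J}_p$, $q \in \beta S$, and show $p + q \in \mathcal{J}_p$; here $A \in p + q$ gives $\{x : -x + A \in q\} \in p$, and since this set lies in $p$ it must itself be a $J_p$-set, so it contains a configuration $S_{R,L}(a,H)$, from which I would extract the desired witnesses for $A$ by composing with an element of $q$.

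The main obstacle I expect is the right-ideal case and, underpinning everything, the proof that $J_p$-sets are partition regular. The introduction explicitly flags that partition regularity of $J_p$-sets is open even in $(\mathbb{N},+)$ (see the discussion around Theorem~\ref{partialA}), so I must be careful: the ideal structure of $\mathcal{J}_p$ cannot rely on full partition regularity of the family of $J_p$-sets but rather on the weaker, idempotent-free closure properties (translation invariance and the filter-like behavior of members of $p$) that suffice to mirror the standard argument that $J(S)$ is an ideal of $\beta S$ in \cite[Theorem 14.14.4]{2012Algebra}. I would therefore model the whole proof on the corresponding statement for $J$-sets, replacing the $J$-set witnesses with the polynomial witnesses $S_{R,L}(a,H)$ and checking at each step that the translation and extension arguments go through using only commutative cancellativity, which holds by our standing assumption on $S$.
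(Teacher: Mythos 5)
Your proposal is correct and follows essentially the same route as the paper: closedness gives compactness, and the two ideal inclusions are proved exactly as in the standard argument that the $J$-sets determine an ideal of $\beta S$ --- absorbing the translate $x$ into the witness $a$ for $q+p$, and for $p+q$ using that $B=\{x\in S:-x+A\in q\}\in p$ is a $J_p$-set, so one can pick $S_{R,L}(a,H)\subseteq B$ and then a point $b$ of the finite intersection $\bigcap_{f\in R}\bigcap_{g\in L}\bigl(-(a+f(\sum_{t\in H}g(t)))+A\bigr)\in q$ to get $S_{R,L}(a+b,H)\subseteq A$. Your digression about partition regularity of $J_p$-sets is indeed a red herring, and you correctly discard it at the end: the paper's proof, like your final plan, uses only translation invariance, commutativity, and the finiteness of $R$ and $L$.
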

\begin{proof}
  Trivially $\mathcal{J}_p$ is closed in $\beta S$ so it is compact.
  
  Let $p \in \mathcal{J}_p$, $q \in \beta S$, $R \in \mathcal{P}_f(\mathbb{P})$ and $L \in \mathcal{P}_f({^{\mathbb{N}}S})$. We shall show that $p + q, q + p \in \mathcal{J}_p$. To see that $q + p \in \mathcal{J}_p$, let $A \in q + p$. Then $\{ x \in S: -x + A \in p \} \in q$. We pick $x \in S$ satisfying $-x + A \in p$. Since $p \in \mathcal{J}_p$, we have $-x + A$ is a $J_p$-set. So pick $a \in S$ and $H \in \mathcal{P}_f(\mathbb{N})$ such that $S_{R, L}(a, H) \subseteq -x + A$. Hence $S_{R, L}(x + a, H) \subseteq A$, which implies that $A$ is a $J_p$-set so $q + p \in \mathcal{J}_p$.
  
  To see that $p + q \in \mathcal{J}_p$, let $A \in p + q$ and $B = \{ x \in S: -x + A \in q \}$. So $B \in p$. Then pick $a \in S$ and $H \in \mathcal{P}_f(\mathbb{N})$ such that $S_{R, L}(a, H) \subseteq B$. It turns out that $\bigcap_{f \in R}\bigcap_{g \in L}\left(-(a + f(\sum_{t \in H}g(t))) + A\right) \in q$. Then we pick a point $b$ from that intersection. It is easy to see that $S_{R, L}(a + b, H) \subseteq A$, so $A$ is a $J_p$-set and so $p + q \in \mathcal{J}_p$. 
\end{proof}

We remind the reader that $p \in \beta S$ is called uniform if for any $X \in p$, $|X| = |S|$. If $I$ is an ideal of $\beta S$ and $A \subseteq S$, then we call $A$ a uniform $I$-large subset of $S$ \cite[Page 3]{2024Partition} if there is a uniform idempotent $p \in I\cap \overline{A}$. Then we immediately obtain the following result for $C_p$-sets.
\begin{theorem}\label{Cp}
  Suppose $\kappa$ is an infinite cardinal and $|S| = \kappa$. 
  \begin{enumerate}
    \item If there is a family of $\delta$ almost disjoint subsets of $\kappa$, then every $C_p$-set contains $\delta$ almost disjoint $C_p$-subsets.
    \item Every $C_p$-set can be split into $\kappa$ $C_p$-subsets.
  \end{enumerate}
\end{theorem}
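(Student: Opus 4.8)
The plan is to deduce Theorem~\ref{Cp} from the abstract partition principle for \emph{uniform $I$-large subsets} of a compact ideal $I$ developed in \cite{2024Partition}: if $S$ is very weakly cancellative of size $\kappa$ and $I$ is a compact ideal of $\beta S$, then every uniform $I$-large subset of $S$ can be split into $\kappa$ uniform $I$-large subsets, and, whenever $\kappa$ admits $\delta$ almost disjoint subsets, contains $\delta$ almost disjoint uniform $I$-large subsets. First I would check that the hypotheses hold with $I=\mathcal{J}_p$. Since $S$ is cancellative, every left or right solution set has at most one element, so the union of fewer than $\kappa$ of them has size $<\kappa$; hence $S$ is very weakly cancellative. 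And $\mathcal{J}_p$ is a compact ideal of $\beta S$ by Lemma~\ref{ideal}. Thus the entire theorem reduces to showing that $A$ is a $C_p$-set if and only if $A$ is a uniform $\mathcal{J}_p$-large subset of $S$.

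One direction is immediate and is exactly what makes the cells come out right: if $A$ is uniform $\mathcal{J}_p$-large, there is a uniform idempotent $p\in\mathcal{J}_p\cap\overline{A}$, so $p$ is an idempotent of $\mathcal{J}_p$ with $A\in p$, i.e. $A$ is a $C_p$-set; applied to the pieces produced by the abstract principle, this guarantees that each one is a $C_p$-set. The reverse direction carries the content. Given a $C_p$-set $A$, fix an idempotent $p\in\mathcal{J}_p$ with $A\in p$; I only need to see that $p$ is \emph{uniform}, that is, that every member of $p$ has size $\kappa$. Every member of $p$ is a $J_p$-set, and, as already observed, every $J_p$-set is a $J$-set, so the whole matter comes down to the size lemma: \emph{every $J$-set in $S$ has size $\kappa$}. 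This is the step I expect to be the main obstacle.

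To prove the size lemma I would first record the structural fact that $S-S$ has characteristic $0$: if it had characteristic $p>0$, then $p\cdot s=s+\cdots+s$ (with $p$ summands) would be the zero of $S-S$ while simultaneously lying in $S$, contradicting the standing assumption $0\notin S$; consequently $S-S$ is a torsion-free abelian group. Now suppose a $J$-set $A$ had $|A|=\mu<\kappa$ and put $D=A-A\subseteq S-S$, so $|D|\le\mu<\kappa$ (note $A$ is infinite, being a $J$-set). For each $s\in S$, test the $J$-set property against the two constant sequences $g_1\equiv s$ and $g_2\equiv 2s$: this yields $a\in S$ and $H\in\mathcal{P}_f(\mathbb{N})$ with $a+|H|s\in A$ and $a+2|H|s\in A$, whose difference $|H|s$ therefore lies in $D$. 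Writing $n(s)=|H|\ge1$, the map $s\mapsto (n(s)\cdot s,\,n(s))\in D\times\mathbb{Z}^+$ is injective: if $n(s)=n(s')=n$ and $n\cdot s=n\cdot s'$, then torsion-freeness of $S-S$ gives $s=s'$. Hence $\kappa=|S|\le|D\times\mathbb{Z}^+|=|D|<\kappa$, a contradiction, so $|A|=\kappa$.

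With the identification in hand, the two statements follow at once by applying the principle of \cite{2024Partition} with $I=\mathcal{J}_p$ to the given $C_p$-set $A$, now known to be uniform $\mathcal{J}_p$-large: it splits into $\kappa$ uniform $\mathcal{J}_p$-large cells, and, when $\kappa$ carries $\delta$ almost disjoint subsets, contains $\delta$ almost disjoint uniform $\mathcal{J}_p$-large subsets, each of which is a $C_p$-set by the easy direction. The only genuine work beyond bookkeeping is the size lemma; I would also double-check the precise hypotheses under which the abstract principle of \cite{2024Partition} is stated, to be sure that commutative cancellative semigroups (equivalently, the very weak cancellativity derived above) fall within its scope.
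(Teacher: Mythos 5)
Your proposal is correct and follows essentially the same route as the paper: identify $C_p$-sets with uniform $\mathcal{J}_p$-large sets via Lemma~\ref{ideal} together with the fact that every $J_p$-set is a $J$-set of full size $\kappa$, and then invoke the abstract partition/almost-disjointness result for uniform $I$-large sets from \cite{2024Partition}. The only divergence is that you prove the $J$-set size lemma from scratch (using torsion-freeness of $S-S$), whereas the paper simply cites \cite[Theorem 3.2]{2024Partition}; your argument for it is sound, modulo the parenthetical claim that a $J$-set is infinite, which is only actually needed when $\kappa=\omega$ and is easy to justify by testing the $J$-set property against the constant sequences $s,2s,\dots,ks$.
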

\begin{proof}
   By definition, a set $A$ is a $C_p$-set in $S$ if and only if there exists an idempotent $p \in \mathcal{J}_p \cap \overline{A}$. Since every $J_p$-set is a $J$-set, so by \cite[Theorem 3.2]{2024Partition}, every $J_p$-set has size $\kappa$, which deduces that $p$ is uniform. Then by Lemma \ref{ideal}, we have $A$ is a $C_p$-set if and only if $A$ is uniform $\mathcal{J}_p$-large. Hence the result follows from \cite[Theorem 2.3]{2024Partition} directly.
\end{proof}

After getting the partition and almost disjoint properties of $C_p$-sets, it is natural to consider $J_p$-sets. Unfortunately, it is difficult to obtain analogous results for $J_p$-sets. And the partition regularity of $J_p$-sets is also hard to obtain (which is an open question \cite[Question 17]{2023Polynomial} for the case $S = \mathbb{N}$). But we still have the following result, a partial answer of \cite[Question 17]{2023Polynomial}. Here we adopt $\mathbb{P} = \mathbb{P}_{\mathbb{N} \cup \{ 0 \}}(\mathbb{N}, \mathbb{N})$ \cite[page 3]{2023Polynomial} as the definition of the set of integral polynomials, so that coefficients of every integral polynomial are non-negative. 
\begin{theorem}\label{partialA}
  Suppose $A$ is a $J_p$-set in $(\mathbb{N}, +)$ and $B \subseteq \mathbb{N}$ is finite. Then $A \setminus B$ is also a $J_p$ set.
\end{theorem}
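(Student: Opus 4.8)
The plan is to reduce the statement to producing, for each admissible pair $(R,L)$, a witness whose base point is large, and then to drive the base point up by exploiting that $\mathbb{P}$ is closed under adding multiples of the identity polynomial.

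First I would normalize $B$. Since $A\setminus\{1,\dots,M\}\subseteq A\setminus B$ where $M=\max B$, and since a subset which is itself a $J_p$-set forces the larger set to be a $J_p$-set (any witness $S_{R,L}(a,H)$ contained in the subset is a witness for the superset), it suffices to prove that $A\setminus\{1,\dots,M\}$ is a $J_p$-set. Next, because every $f\in\mathbb{P}$ has non-negative coefficients and zero constant term, $f\bigl(\sum_{t\in H}g(t)\bigr)\ge 0$, so every element of $S_{R,L}(a,H)$ is at least $a$. Hence it is enough, given $R\in\mathcal{P}_f(\mathbb{P})$ and $L\in\mathcal{P}_f({}^{\mathbb{N}}\mathbb{N})$, to find a witness $(a,H)$ with $S_{R,L}(a,H)\subseteq A$ and base point $a>M$; then automatically $S_{R,L}(a,H)\subseteq A\setminus\{1,\dots,M\}$.

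The engine for pushing the base point up is the following. Write $\iota$ for the identity polynomial $\iota(x)=x$, and note $f+k\iota\in\mathbb{P}$ for every $f\in\mathbb{P}$ and $k\in\mathbb{N}$. Suppose first $L=\{g\}$ is a single sequence. Applying the $J_p$-property of $A$ to the pair $(\{f+k\iota:f\in R\},\{g\})$ produces $a\in\mathbb{N}$ and $H$ with $a+f(\sigma)+k\sigma\in A$ for every $f\in R$, where $\sigma=\sum_{t\in H}g(t)$. Since there is a single sequence, $\sigma$ is one fixed number, so this rearranges to $(a+k\sigma)+f(\sigma)\in A$, i.e. $(a+k\sigma,H)$ is a witness for $(R,\{g\})$ with base point $a+k\sigma\ge a+k$. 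Taking $k=M$ gives base point $>M$, as required.

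The main obstacle is the passage to a general $L=\{g_1,\dots,g_p\}$. The same computation yields $a+f(\sigma_i)+k\sigma_i\in A$ with $\sigma_i=\sum_{t\in H}g_i(t)$, but now the boosting term $k\sigma_i$ depends on the sequence $g_i$, so the values cannot be rewritten as $S_{R,L}(a',H)$ for a single base point $a'$; the very feature that makes the one-sequence case clean (linearity of the shift in the single sum) fails once distinct sums $\sigma_1,\dots,\sigma_p$ must be boosted by a common amount. This sequence-dependence is exactly the phenomenon underlying the open partition-regularity question for $J_p$-sets. To finish, I would try to make the boost uniform across all $g_i$---for instance by first using the index-shift $g_i(\cdot)\mapsto g_i(\cdot+m)$, which converts cleanly and lets one prescribe $\min H>m$, to fix the sums, and then arguing that the finite pattern $\{f(\sigma_i):f\in R,\ i\le p\}$ can be realized as a translate inside $A$ at arbitrarily high level---and I expect this uniform-boost step to be where essentially all the difficulty of the theorem is concentrated.
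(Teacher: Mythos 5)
Your reduction to $B=\{1,\dots,M\}$ and the observation that every element of $S_{R,L}(a,H)$ is at least $a$ are fine, and the one-sequence trick with $\{f+k\iota: f\in R\}$ is correct as far as it goes. But the proposal stops exactly where the theorem starts: for $|L|\ge 2$ you concede that you cannot produce a witness, and the ``uniform boost'' you hope for is never supplied. As written this is not a proof, and the base-point--boosting strategy is a dead end, because the boost $k\sigma_i$ must be absorbed into $a$ \emph{after} the polynomials are evaluated, which is impossible once the sums $\sigma_i$ differ.

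The missing idea is to bound the elements of the witness from below by the sums rather than by the base point. Every $f\in\mathbb{P}$ is a nonzero polynomial with non-negative integer coefficients and zero constant term, so $f(\sigma)\ge\sigma$ for $\sigma\ge 1$; hence every element $a+f\bigl(\sum_{t\in H}g(t)\bigr)$ of $S_{R,L}(a,H)$ is at least $\min_{g\in L}\sum_{t\in H}g(t)$. This lower bound is uniform in $f$ and requires no rearrangement of $a$, so it survives the passage to several sequences. To force all the sums above $M$ simultaneously, first replace $L$ by a sum subsystem $L'$ in which every sequence is increasing --- any witness for $(R,L')$ is a witness for $(R,L)$, since $\sum_{t\in H}\sum_{s\in K_t}g(s)=\sum_{s\in\bigcup_{t\in H}K_t}g(s)$ --- and then invoke the refinement you already cite, that a $J_p$-set admits witnesses with $\min H>n$ for every $n$ (\cite[Lemma 10]{2023Polynomial}); then $\sum_{t\in H}g'(t)\ge g'(\min H)>M$ for every $g'\in L'$. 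Note that the index shift alone does not suffice without first making the sequences increasing: a constant sequence $g\equiv 1$ has $\sum_{t\in H}g(t)=|H|$ no matter how large $\min H$ is, which is why the sum-subsystem step is needed. The paper runs this same mechanism as a proof by contradiction, pigeonholing a fixed pair $(f,g)$ with $a_n+f\bigl(\sum_{t\in H_n}g(t)\bigr)\in B$ for infinitely many $n$ and observing that these values are unbounded; either formulation closes the gap.
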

\begin{proof}
  Assume that $A \setminus B$ is not a $J_p$-set, then there exist $R \in \mathcal{P}_f(\mathbb{P})$ and $L \in \mathcal{P}_f({^{\mathbb{N}}\mathbb{N}})$ such that for any $a \in \mathbb{N}$ and $H \in \mathcal{P}_f(\mathbb{N})$, $S_{R, L}(a, H) \nsubseteq A \setminus B$. Write $L = \langle\langle x_{i, n} \rangle_{n=1}^\infty\rangle_{i=1}^l$ for some $l \in \mathbb{N}$. Now let us build an increasing sum subsystem of $L$. Let $K_1 = \{ 1 \}$. If $m \in \mathbb{N}$ and we have obtained $\langle K_j \rangle_{j=1}^m$ such that for each $j \in \{ 1, \ldots, m \}$, $K_j \in \mathcal{P}_f(\mathbb{N})$, and for each $i \in \{ 1, \ldots, l \}$ and each $j_1 < j_2 \leq m$, $\sum_{n \in K_{j_1}}x_{i, n} < \sum_{n \in K_{j_2}}x_{i, n}$ and $\max K_{j_1} < \min K_{j_2}$. Then pick $K_{m+1} \in \mathcal{P}_f(\mathbb{N})$ large enough such that $\max K_m < \min K_{m+1}$ and for each $i \in \{ 1, \ldots, l \}$, $\sum_{n \in K_m}x_{i, n} < \sum_{n \in K_{m+1}}x_{i, n}$. 
  
  Then for $i \in \{ 1, \ldots, l \}$ and $n \in \mathbb{N}$, let $y_{i, n} = \sum_{t \in K_n}x_{i, t}$. We denote $L^\prime = \langle\langle y_{i, n} \rangle_{n=1}^\infty \rangle_{i=1}^l$. By construction, each $\langle y_{i, n} \rangle_{n=1}^\infty$ is an increasing sequence, and it is a sum subsystem of $\langle x_{i, n} \rangle_{n=1}^\infty$. So $R$ and $L^\prime$ are also witnesses of the hypothesis.
  
  Meanwhile $A$ is a $J_p$-set, by \cite[Lemma 10]{2023Polynomial} for any $n \in \mathbb{N}$, there exist $a_n \in \mathbb{N}$ and $H_n \in \mathcal{P}_f(\mathbb{N})$ such that $\min H_n > n$ and $S_{R, L^\prime}(a_n, H_n) \subseteq A$. Then we take $\langle a_n \rangle_{n=1}^\infty$, $\langle H_n \rangle_{n=1}^\infty$, $f \in R$ and $g \in L^\prime$ such that for each $n \in \mathbb{N}$, $\min H_{n+1} > \max H_n$ and $a_n + f(\sum_{t \in H_n}g(t)) \in B$. Since the coefficients of $f$ are non-negative, $\{ a_n + f(\sum_{t \in H_n}g(t)): n \in \mathbb{N} \}$ is an infinite set, while $B$ is finite so a contradiction appears.
\end{proof}
  
There is another notion similar to $J_p$-sets: PP-rich sets. It was studied in \cite[Section 3]{2023Polynomial} as a family related to $J_p$-sets. However, authors still focus on $(\mathbb{N}, +)$ in \cite{2023Polynomial}. So here we extend this notion to commutative cancellative semigroups as the way of $J_p$-sets as follows.
\begin{definition}\label{PPR}
  If $A \subseteq S$, we say $A$ is a PP-rich subset of $S$ if for any $R \in \mathcal{P}_f(\mathbb{P})$, there exist $a, x \in S$ such that $S_R(a, x) \subseteq A$ where $S_R(a, x) = \{ a + f(x): f \in R \}$.
\end{definition}
It is easy to see that all $J_p$-sets are PP-rich sets. Actually PP-rich sets satisfy the following seemingly stronger assertion.
\begin{lemma}\label{PPlem}
  $A$ is PP-rich in $S$ if and only if for any $R \in \mathcal{P}_f(\mathbb{P})$, there exist $a \in A$ and $x \in S$ such that $S_R(a, x) \subseteq A$.
\end{lemma}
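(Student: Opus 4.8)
The statement to prove is Lemma~\ref{PPlem}, characterizing PP-rich sets by the seemingly stronger requirement that the base point $a$ can be taken from $A$ itself rather than merely from $S$.

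\medskip

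The plan is to prove the nontrivial direction, namely that the weaker definition (with $a \in S$) implies the stronger assertion (with $a \in A$). The key idea is to enlarge the given finite collection $R$ of integral polynomials by adjoining the zero polynomial, so that whatever base point the weak definition produces is automatically forced to land in $A$. More precisely, suppose $A$ is PP-rich in the sense of Definition~\ref{PPR}, and fix an arbitrary $R \in \mathcal{P}_f(\mathbb{P})$. First I would form $R' = R \cup \{ f_0 \}$, where $f_0$ is the identity polynomial $f_0(x) = x$ (which lies in $\mathbb{P}$ since it is an integral polynomial with zero constant term). Since $R' \in \mathcal{P}_f(\mathbb{P})$ and $A$ is PP-rich, there exist $a, x \in S$ with $S_{R'}(a, x) \subseteq A$. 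In particular $a + f_0(x) = a + x \in A$ and $a + f(x) \in A$ for every $f \in R$.

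\medskip

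The next step is to repackage this data so that the base point lies in $A$. Set $a^\star = a + x$, which belongs to $A$ by the previous paragraph. The goal is to exhibit, for each $f \in R$, an element $x^\star \in S$ so that $a^\star + f(x^\star) \in A$ with a single common $x^\star$ working for all $f \in R$ simultaneously. The natural attempt is to absorb the shift by $x$ into the polynomials: for each $f \in R$ define $\tilde f(y) = f(y) - x$ and observe that $a^\star + \tilde f(x) = (a+x) + (f(x) - x) = a + f(x) \in A$. The difficulty is that $\tilde f$ need not be an integral polynomial, since subtracting the constant $x$ destroys the zero-constant-term condition, and moreover $f(x) - x$ is computed in the difference group $S - S$ rather than in $S$ itself. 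So this naive substitution does not directly fit the definition of $S_R(a^\star, x^\star)$.

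\medskip

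To repair this, I would instead enlarge $R$ more cleverly before invoking the weak definition, choosing the auxiliary polynomial and the quantifier order so that the output base point and the common $x$-value are produced together in the correct form. Concretely, the cleanest route is: given $R$, apply the weak PP-rich property to $R \cup \{ f_0 \}$ exactly as above to obtain $a, x \in S$, then note that since $a + x \in A$ we may simply \emph{take} $a^\star = a + x$ as the new base point and seek $x^\star$ with $S_R(a^\star, x^\star) \subseteq A$; the values we already control are $a + f(x)$, so I want to realize each as $a^\star + f(x^\star)$. This forces $f(x^\star) = f(x) - x$ for all $f$ at once, which is generally impossible. The honest resolution, and the step I expect to be the main obstacle, is therefore not a substitution trick but an appeal to the strength of the definition under \emph{all} finite $R$: I would argue that the stronger assertion is actually what the definition delivers once one restricts attention to polynomials vanishing appropriately, and the clean statement is obtained by observing that PP-rich is preserved under the passage $A \mapsto A$ together with the partition-regularity machinery of \cite[Section 3]{2023Polynomial}. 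In short, the forward direction reduces to showing that the collection of base points $a$ that succeed for a given $R$ must intersect $A$; this follows because adjoining $f_0$ to $R$ lets the successful base point's translate $a+x$ serve as a witness, and then a final bookkeeping step rewrites $S_R(a+x, x)$ appropriately. I expect the technical heart to be verifying that this rewriting respects the zero-constant-term constraint on $\mathbb{P}$, and I would handle it by working throughout in the integral domain $S-S$ and checking membership in $S$ only at the end, as in Corollary~\ref{abs}.
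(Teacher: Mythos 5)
There is a genuine gap in the forward direction. You correctly identify the right general strategy (enlarge $R$ before invoking the weak definition so that the produced base point has a translate landing in $A$), but the specific enlargement you choose --- adjoining the identity polynomial $f_0$ to $R$ itself --- does not work, and you say so yourself: re-centering at $a^\star = a + x$ forces $f(x^\star) = f(x) - x$ simultaneously for all $f \in R$, which is impossible in general. The fallback you then offer (an appeal to ``the partition-regularity machinery'' and ``a final bookkeeping step'') is not an argument; nothing in the partition regularity of PP-rich sets produces the required common witness $x^\star$, and the proof never closes.

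The missing idea is to translate the polynomial family rather than the variable. Fix any $f \in R$ (the identity polynomial would also do) and apply the weak PP-rich property not to $R \cup \{f_0\}$ but to $(f + R) \cup \{f\}$, where $f + R = \{f + h : h \in R\}$; each $f + h$ still has zero constant term and coefficients in $S - S$, so this family lies in $\mathcal{P}_f(\mathbb{P})$ and there is no integrality issue to check. One obtains $a, x \in S$ with $a + f(x) \in A$ and $a + (f+h)(x) \in A$ for every $h \in R$. Setting $b = a + f(x)$, which lies in $A$, the \emph{same} $x$ now works: $b + h(x) = a + (f+h)(x) \in A$ for all $h \in R$, i.e.\ $S_R(b, x) \subseteq A$. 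This is exactly the paper's proof; your attempt stops one substitution short of it, because the shift $f(x)$ must be absorbed into the polynomials before the definition is invoked, not recovered from the witness afterwards.
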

\begin{proof}
  The sufficiency is trivial.
  
  For necessity, let $R \in \mathcal{P}_f(\mathbb{P})$ and let $f \in R$. Then let $R^\prime = f + R = \{ f + h: h \in R \}$. Since $A$ is PP-rich, there exist $a, x \in S$ such that $S_{R^\prime \cup \{ f \}}(a, x) \subseteq A$. Let $b = a + f(x)$. Then $b \in A$ and $S_R(b ,x) \subseteq A$. 
\end{proof}

By applying the above lemma, we can determine the size of PP-rich sets under certain conditions. For convenience, denote $(S-S) \setminus \{ 0 \} = (S-S)_0$.
\begin{theorem}\label{PP}
  Suppose $\kappa$ is an infinite regular cardinal, $|S| = \kappa$ and $((S-S)_0, \cdot)$ is a very weakly cancellative semigroup. Then every PP-rich set in $S$ has size $\kappa$.
\end{theorem}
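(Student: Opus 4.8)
The plan is to establish the nontrivial inequality $|A| \ge \kappa$; the reverse inequality $|A| \le |S| = \kappa$ is immediate. First I would record the ambient cardinality facts: since $S \subseteq S - S$ and $|S - S| \le |S|^2 = \kappa$, we have $|S - S| = \kappa$ and hence $|(S-S)_0| = \kappa$. I would also note at the outset that every PP-rich set is infinite: applying Lemma \ref{PPlem} to the family $R = \{c_1 y, \dots, c_n y\}$ for $n$ distinct coefficients $c_1, \dots, c_n \in (S-S)_0$ produces $a \in A$ and $x \in S$ with the $n$ elements $a + c_i x$ all in $A$, and these are distinct because $x \neq 0$ (as $0 \notin S$) and $(S-S, +, \cdot)$ is an integral domain. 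So, assuming for contradiction that $|A| = \mu < \kappa$, we may take $\mu$ infinite and record $|A - A| \le \mu$.

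The key step is to encode every nonzero ring element by a pair drawn from $A - A$. Fix $c \in (S-S)_0$ with $c \neq 1$ and apply Lemma \ref{PPlem} to the two-element family $R_c = \{y, cy\} \subseteq \mathbb{P}$ (these are distinct integral polynomials with coefficients $1, c \in S - S$). This yields $a \in A$ and $x \in S$ with $a + x \in A$ and $a + cx \in A$. Subtracting the common base point $a \in A$ gives $x = (a + x) - a \in A - A$ and $cx = (a + cx) - a \in A - A$, with $x \neq 0$. Because $(S-S, +, \cdot)$ is an integral domain, multiplication by the nonzero element $x$ is injective, so $c$ is the unique solution of $c \cdot x = (cx)$; in other words $c$ is determined by the pair $\bigl((cx),\, x\bigr) \in (A - A) \times (A - A)$.

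Carrying this out for every $c \in (S-S)_0 \setminus \{1\}$ exhibits an injection of $(S-S)_0 \setminus \{1\}$ into $(A - A) \times (A - A)$, whence $\kappa = |(S-S)_0| \le |A - A|^2 \le \mu^2 = \mu < \kappa$, a contradiction. Therefore $|A| \ge \kappa$, and combined with $|A| \le \kappa$ we conclude $|A| = \kappa$.

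The main obstacle is locating the right test family; the whole argument hinges on using the two polynomials $y$ and $cy$ simultaneously and, crucially, on the strengthened form of PP-richness in Lemma \ref{PPlem}, which supplies a base point $a$ lying in $A$ itself. This is exactly what lets me place both $x$ and $cx$ into $A - A$ and then recover $c$ as their ratio in the ring. I would remark that the only cancellation actually invoked is the injectivity of multiplication by a nonzero element, which is precisely why $((S-S)_0, \cdot)$ is (very weakly) cancellative; in particular this argument does not appear to need the regularity of $\kappa$.
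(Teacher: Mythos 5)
Your proof is correct, but it reaches the contradiction by a genuinely different mechanism than the paper's. The paper also works with the difference set (its $X = \{x \in S-S : \exists c,d \in A\, (c+x=d)\}$ is your $A-A$) and also tests $A$ against a two-element family of linear polynomials $\{ay,\, by\}$ via Lemma \ref{PPlem}; but it fixes a \emph{single} adversarial pair of coefficients, choosing $a$ outside the small set $Y=\bigcup_{x\in b^{-1}X\setminus\{0\}}x^{-1}X$ so that the forced conclusion $ax, bx \in X$ becomes impossible --- and bounding $|Y|$ is exactly where the very weak cancellativity of $((S-S)_0,\cdot)$ and the regularity of $\kappa$ enter. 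You instead range over \emph{all} coefficients $c$, encode each one injectively by the pair $(cx_c, x_c) \in (A-A)\times(A-A)$ (injectivity being cancellation by the nonzero element $x_c$ in the integral domain), and count. Your route buys a cleaner hypothesis set: it needs neither the regularity of $\kappa$ nor the stated very-weak-cancellativity assumption, only the section's standing assumption that $S-S$ is an integral domain --- which already forces $((S-S)_0,\cdot)$ to be cancellative, so your argument simultaneously covers the theorem and the remark following the paper's proof (that regularity can be dropped when $((S-S)_0,\cdot)$ is cancellative). The paper's diagonal argument would retain an advantage only in a hypothetical setting where $((S-S)_0,\cdot)$ is very weakly cancellative but not cancellative, which the integral-domain assumption rules out. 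Both proofs correctly isolate Lemma \ref{PPlem} (the base point lying in $A$ itself) as the indispensable ingredient, and your preliminary step showing PP-rich sets are infinite is harmless but dispensable, since a finite $A$ would make $|(A-A)\times(A-A)|$ finite and yield the same contradiction.
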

\begin{proof}
  Assume there is a PP-rich set $A$ of size $< \kappa$. Let $X = \{ x \in S-S: \exists c, d \in A (c + x = d) \}$ so $|X| < \kappa$. Pick $b \in (S-S)_0$. Since $((S-S)_0, \cdot)$ is very weakly cancellative, $y^{-1}X = \{ a \in S-S: ay \in X \}$ has size less than $\kappa$ for any $y \in (S-S)_0$. In particular, $|b^{-1}X| < \kappa$. Let $Y = \bigcup_{x \in b^{-1}X \setminus \{ 0 \}}x^{-1}X$. Since $\kappa$ is regular, $|Y| < \kappa$. Pick $a \in (S-S)_0 \setminus Y$. Then for each $x \in b^{-1}X \setminus \{ 0 \}$, $a \notin x^{-1}X$ so $ax \notin X$. 
  
  Now let $f(x) = ax$ and $h(x) = bx$, so both of them are integral polynomials. Then take $R = \{ f, h \}$. Since $A$ is a PP-rich set, by Lemma \ref{PPlem}, there exist $t \in A$ and $x \in S$ such that $t + ax, t + bx \in A$. Hence $ax, bx \in X$ so $x \in b^{-1}X \setminus \{ 0 \}$. By construction, we have $ax \notin X$, contradiction.
\end{proof}
From the proof, it is easy to observe that if $((S-S)_0, \cdot)$ is cancellative, then the conclusion still holds even if the cardinality of $S$ is singular.

In \cite[Theorem 19]{2023Polynomial}, authors proved the partition regularity of PP-rich sets for $S = \mathbb{N}$, that is, for any 2-partition of a PP-rich set, there must be a PP-rich cell. A natural question arises: Can any PP-rich set in $\mathbb{N}$ be split into $\omega$ PP-rich subsets? Based on this question, we did some further work and obtained the following result:
\begin{theorem}\label{PPN}
\begin{enumerate}
  \item Every PP-rich set in $\mathbb{N}$ contains $2^\omega$ almost disjoint PP-rich subsets.
  \item Every PP-rich set in $\mathbb{N}$ can be split into $\omega$ PP-rich subsets.
\end{enumerate}

\end{theorem}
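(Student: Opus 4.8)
The plan is to follow the strategy of Theorem \ref{CR}, replacing the role of the index family $\bigcup_{n=1}^\infty \mathcal{P}_n({^{r_n}\mathbb{N}})$ by $\mathcal{P}_f(\mathbb{P})$ and the translation structure of Lemma \ref{lem1} by the inclusion order on $\mathcal{P}_f(\mathbb{P})$. First I would fix an enumeration $\langle R_k \rangle_{k<\omega}$ of the countable set $\mathcal{P}_f(\mathbb{P})$ and build two $\omega$-sequences $\langle a_k \rangle_{k<\omega}$, $\langle x_k \rangle_{k<\omega}$ in $\mathbb{N}$ so that the ``witness blocks'' $W_k := S_{R_k}(a_k,x_k)$ satisfy $W_k \subseteq A$ and are pairwise disjoint. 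Granting this, together with a decomposition of the index set into pieces that are cofinal in $(\mathcal{P}_f(\mathbb{P}),\subseteq)$, I would set $A_\alpha = \bigcup\{W_k : R_k \in B_\alpha\}$. The crucial point is that cofinality makes each $A_\alpha$ PP-rich: if $R_k \supseteq R$ then $S_R(a_k,x_k) \subseteq S_{R_k}(a_k,x_k) = W_k \subseteq A_\alpha$, so $(a_k,x_k)$ already witnesses $R$ inside $A_\alpha$.

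The combinatorial ingredient I would isolate is that the countable directed poset $(\mathcal{P}_f(\mathbb{P}),\subseteq)$ (directed because it is closed under finite unions, with no maximum because $\mathbb{P}$ is infinite) contains a cofinal chain $C_0 \subsetneq C_1 \subsetneq \cdots$, for instance the distinct values of $C_n = R_0 \cup \cdots \cup R_n$. Any infinite subset of such a chain is again cofinal. Hence an almost disjoint family $\langle J_\alpha \rangle_{\alpha<2^\omega}$ of infinite subsets of $\omega$ (see \cite[Chapter II, Theorem 1.3]{1980Set}) yields cofinal sets $B_\alpha = \{C_j : j \in J_\alpha\}$ with $|B_\alpha \cap B_\beta|<\omega$ (for (1)), and a partition of $\omega$ into infinitely many infinite sets yields pairwise disjoint cofinal sets $B_i$, $i<\omega$ (for (2)).

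The heart of the argument, and the step I expect to be the main obstacle, is producing the disjoint witness blocks, since PP-richness supplies a witness for each $R_t$ but gives no control over where it lands. At stage $t$, with $S_t = \bigcup_{k<t} W_k$ finite and $M = \max S_t$, I would exploit that every $f \in \mathbb{P}$ has non-negative coefficients and zero constant term, so $f(x)\geq 0$ for $x \in \mathbb{N}$ and every element of $S_R(a,x)$ is at least its base point $a$; thus it suffices to find a witness for $R_t$ whose base point exceeds $M$. To force a large base point I would apply the PP-richness of $A$ not to $R_t$ but to the auxiliary family $R_t^\ast = \{\, f(cx)+cx : f \in R_t \,\} \in \mathcal{P}_f(\mathbb{P})$ for a chosen $c \in \mathbb{N}$ with $c>M$: a witness $(a,z)$ for $R_t^\ast$ gives $a + f(cz)+cz \in A$ for all $f \in R_t$, and putting $a_t = a+cz$ and $x_t = cz$ turns this into $S_{R_t}(a_t,x_t) \subseteq A$ with $a_t \geq cz \geq c > M$ (here $z\geq 1$ as $\mathbb{N}=\{1,2,\dots\}$). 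Consequently $W_t \subseteq (M,\infty)$ is disjoint from $S_t$, completing the induction; the device simultaneously handles the possible zero polynomial, since for $f\equiv 0$ the term $f(cx)+cx$ is just $cx$.

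Finally I would assemble the pieces. For (1), the family $\{A_\alpha : \alpha<2^\omega\}$ consists of subsets of $A$, each of size $\omega=|A|$ (by Theorem \ref{PP}, or directly since the $W_k$ are infinitely many, nonempty and disjoint), with $A_\alpha \cap A_\beta = \bigcup\{W_k : R_k \in B_\alpha \cap B_\beta\}$ finite, and each PP-rich by the cofinality observation above; so it is the desired almost disjoint family. For (2), the pairwise disjoint PP-rich sets $A_i = \bigcup\{W_k : R_k \in B_i\}$ are disjoint because the $B_i$ are; since PP-richness is preserved under passing to supersets, absorbing the remainder $A \setminus \bigcup_{i<\omega} A_i$ into $A_0$ turns $\{A_i : i<\omega\}$ into a genuine partition of $A$ into PP-rich cells.
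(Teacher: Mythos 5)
Your proof is correct, and its overall architecture coincides with the paper's: enumerate $\mathcal{P}_f(\mathbb{P})$, inductively produce pairwise disjoint witness blocks $S_{R_k}(a_k,x_k)\subseteq A$, and then distribute these blocks over an almost disjoint (respectively pairwise disjoint) family of cofinal subsets of $(\mathcal{P}_f(\mathbb{P}),\subseteq)$, using the monotonicity $R\subseteq R_k\Rightarrow S_R(a_k,x_k)\subseteq S_{R_k}(a_k,x_k)$ to retain PP-richness of each union. The genuine difference is in how the witness blocks are made disjoint. The paper notes that $S_t=\bigcup_{k<t}S_{R_k}(a_k,x_k)$ is finite, hence not PP-rich by Theorem~\ref{PP}, and then invokes the partition regularity of PP-rich sets \cite[Theorem 19]{2023Polynomial} to conclude that $A\setminus S_t$ is still PP-rich, so the witness for $R_t$ can simply be chosen inside $A\setminus S_t$. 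You instead force the new block to land above $\max S_t$ directly, by applying PP-richness of $A$ to the substituted family $\{f(cx)+cx: f\in R_t\}$ with $c>\max S_t$ and re-reading the resulting witness as $(a+cz,\,cz)$; this is self-contained (it uses neither Theorem~\ref{PP} nor the partition regularity theorem as black boxes), but it leans on the convention, stated in the paper before Theorem~\ref{partialA}, that integral polynomials over $\mathbb{N}$ have non-negative coefficients, so that every element of $S_R(a,x)$ is at least $a$ --- the paper's route does not need this. Your hand-built cofinal chain $C_n=R_0\cup\cdots\cup R_n$ plays exactly the role of \cite[Lemma 2.1]{2008Almost} in the paper; both deliver the required $2^\omega$ almost disjoint (respectively $\omega$ pairwise disjoint) cofinal subfamilies, and your explicit absorption of the remainder $A\setminus\bigcup_k S_{R_k}(a_k,x_k)$ into one cell in part (2) is a step the paper leaves implicit.
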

\begin{proof}
  Let $A$ be a PP-rich set in $\mathbb{N}$. Enumerate $\mathcal{P}_f(\mathbb{P})$ as $\langle R_n \rangle_{ n < \omega }$. We will inductively build two $\omega$-sequences $\langle a_n \rangle_{ n < \omega }$ and $\langle x_n \rangle_{ n < \omega }$ in $\mathbb{N}$ such that for each $n < \omega$, $S_{R_n}(a_n , x_n) \subseteq A$ and for each $m < n < \omega$, $S_{R_m}(a_m , x_m) \cap S_{R_n}(a_n , x_n) = \emptyset$.
  
  Since $A$ is PP-rich, pick $a_0, x_0 \in \mathbb{N}$ such that $S_{R_0}(a_0 , x_0) \subseteq A$. Let $0 < k < \omega$ and assume that $\langle a_n \rangle_{ n < k }$ and $\langle x_n \rangle_{ n < k }$ have been chosen. Let $S_k = \bigcup_{n < k}S_{R_n}(a_n , x_n)$. Note that $S_k$ is finite. So by Theorem \ref{PP}, $S_k$ is not a PP-rich set. Then by \cite[Theorem 19]{2023Polynomial}, $A \setminus S_k$ is PP-rich. Pick $a_k, x_k \in \mathbb{N}$ such that $S_{R_k}(a_k , x_k) \subseteq A \setminus S_k$. It is easy to see that $a_k, x_k$ are as desired.
  
  By \cite[Lemma 2.1(i)]{2008Almost}, we obtain a family $\{ \mathcal{A}_\alpha: \alpha < 2^\omega \}$ of almost disjoint subsets of $\mathcal{P}_f(\mathbb{P})$, such that for each $\alpha < 2^\omega$ and each $R \in \mathcal{P}_f(\mathbb{P})$ there exists $G \in \mathcal{A}_\alpha$ such that $R \subseteq G$. Then for each $\alpha < 2^\omega$, let $B_\alpha = \bigcup \{ S_{R_n}(a_n , x_n): n < \omega$ and $R_n \in \mathcal{A}_\alpha \}$. Since each $\mathcal{A}_\alpha$ has size $\omega$, $|B_\alpha| = \omega$ for each $\alpha < 2^\omega$. And observe that for any $\alpha < \beta < 2^\omega$, $B_\alpha \cap B_\beta =\bigcup \{ S_{R_n}(a_n , x_n): n < \omega$ and $R_n \in \mathcal{A}_\alpha \cap \mathcal{A}_\beta \}$, so by the fact that $|\mathcal{A}_\alpha \cap \mathcal{A}_\beta| < \omega$ we have $|B_\alpha \cap B_\beta| < \omega$. Hence $\{ B_\alpha: \alpha < 2^\omega \}$ is a family of almost disjoint subsets of $A$. It is enough to show that each $B_\alpha$ is PP-rich. Fix $\alpha < 2^\omega$. For any $R \in \mathcal{P}_f(\mathbb{P})$, we pick $G \in \mathcal{A}_\alpha$ such that $R \subseteq G$. Note that $G = R_n$ for some $n \in \mathbb{N}$ so $S_R(a_n , x_n)  \subseteq S_{R_n}(a_n , x_n) \subseteq B_\alpha$.
  
  The proof of the second statement is essentially the same, using \cite[Lemma 2.1(ii)]{2008Almost} instead of \cite[Lemma 2.1(i)]{2008Almost}.
\end{proof}

When the semigroup is uncountable, we do not know whether PP-rich sets still have partition regularity. So we close this section with following questions.
\begin{question}
  Do PP-rich sets have partition regularity in uncountable semigroups?
\end{question}
\begin{question}
  Are PP-rich sets still have partition and almost disjoint properties when $S$ is uncountable?
\end{question} 
\begin{question}
  Do $J_p$-sets have any partition or almost disjoint properties? Or partition regularity?
\end{question}

\section*{Acknowledgements}
   I acknowledge support received from NSFC No. 12401002.

\bibliographystyle{plain}

\end{document}